\newtheorem{theorem}{Theorem}[section]
\newtheorem{lemma}[theorem]{Lemma}
\newtheorem{definition}[theorem]{Definition}
\newtheorem{proposition}[theorem]{Proposition}
\newtheorem{remark}[theorem]{Remark}
\numberwithin{equation}{section} 
\newcommand{\norm}[1]{\left\|#1\right\|}
\newcommand{\T}{\ensuremath{\mathbb{T}}}
\newcommand*{\R}{\ensuremath{\mathbb{R}}}
\newcommand*{\Z}{\ensuremath{\mathbb{Z}}}
\newcommand{\eps}{\varepsilon}
\newcommand{\hamx}{\dot{H}^{-\frac12}_x}
\newcommand{\ham}{\dot{H}^{-\frac12}}
\newcommand{\quotes}[1]{``#1''}
\newcommand{\mez}{\frac{1}{2}}
\renewcommand{\MR}[1]{} 
\newcommand{\be}{\begin{equation}}
\newcommand{\ee}{\end{equation}}
\title{Global existence, Hamiltonian conservation and vanishing viscosity for the surface quasi-geostrophic equation}
\author[L. De Rosa]{Luigi De Rosa}
\address[L. De Rosa]{Gran Sasso Science Institute, viale Francesco Crispi, 7, 67100 L’Aquila, Italy}
\email{luigi.derosa@gssi.it}
\author[M. Latocca]{Micka\"el latocca}
\address[M. Latocca]{Laboratoire de Mathématiques et de Modélisation d’Évry (LaMME), Université d’Évry,
23 Bd François Mitterrand, 91000 Évry-Courcouronnes, France}
\email{mickael.latocca@univ-evry.fr}
\author[J. Park]{Jaemin Park}
\address[J. Park ]{Departement of Mathematics,
Yonsei university
50 Yonsei-Ro, Seodaemun-Gu, 03722 Seoul, South Korea}
\email{jpark776@yonsei.ac.kr}
\date{\today}
\subjclass[2020]{35Q35 - 35Q86 - 76B03 - 35D30.}
\keywords{SQG equation - Weak solutions - Inviscid limits - Anomalous dissipation.}
\thanks{\textit{Acknowledgments.} JP was partially supported by SNSF Ambizione
fellowship project PZ00P2-216083, the Yonsei University Research Fund of 2024-22-0500, the
POSCO Science Fellowship of POSCO TJ Park Foundation, HYUNSONG Educational and Cultural Foundation, and  the National Research Foundation of Korea (NRF) grant funded by the Korea government (MEST) No. RS-2026-25476891.}
\begin{document}

\begin{abstract}
For any initial datum $\theta_0\in L^{\frac{4}{3}}_x$ it is proved the existence of a global-in-time weak solution $\theta \in L^\infty_t L^{\frac43}_x$ to the surface quasi-geostrophic equation whose Hamiltonian, i.e. the $\dot{H}^{-\frac{1}{2}}_x$ norm, is constant in time. The solution is obtained as a vanishing viscosity limit.  The main idea is to propagate in time the non-concentration of the $L^{\frac{4}{3}}_x$ norm of the initial data, from which the strong compactness in the Hamiltonian norm is deduced. Minimal Onsager supercritical conditions preventing anomalous dissipation are given.
\end{abstract}

\maketitle

\section{Introduction}
In the two-dimensional spatially periodic setting, we consider the surface quasi-geostrophic equation
\begin{equation}\label{SQG}\tag{SQG}
  \begin{array}{rcll}
  \partial_t\theta + u\cdot\nabla \theta &=& 0 & \\ 
  u &=& \mathcal R^\perp \theta &\\ 
  \theta(0,\cdot)&=&\theta_0,&
  \end{array}
\end{equation}  
where $\mathcal R^\perp := \nabla^\perp (-\Delta)^{-\mez}$ is the orthogonal vector-valued Riesz transform, $\theta : [0,\infty) \times \mathbb{T}^2 \to \mathbb{R}$ and $\theta_0:\T^2\rightarrow \R$ is a given initial datum. For convenience in the exposition, we will restrict to the case\footnote{Whenever the initial datum is only a periodic distribution, the zeros average condition should be interpreted as $\langle \theta_0,1\rangle =0$. This will not be specified anymore.}
$$
\int_{\T^2}\theta_0(x)\,dx=0,
$$
and note that all the results obtained in the current paper generalize to non-zero average initial data with minor modifications. The zero average condition is indeed propagated in time by \eqref{SQG} and it allows for Sobolev norms of homogeneous type.

Note that \eqref{SQG} formally conserves the Hamiltonian
\begin{equation}
    \label{eq.hamiltonian}
    \mathcal H(t):=\|\theta(t)\|_{\dot{H}^{-\mez}_x}^2, 
\end{equation}
which can be seen by taking the $\dot{H}^{-\mez}(\T^2)$ scalar product between the first equation in \eqref{SQG} and $\theta$. Here $\dot{H}^{-\mez}(\T^2)$ denotes the homogeneous Sobolev space (see Section \ref{S:tools}). Similarly, because $u$ is divergence-free and $\theta$ is transported by $u$, there also holds 
\[
    \|\theta(t)\|_{L_x^p} = \|\theta_0\|_{L_x^p} \qquad \forall t\geq 0, \, \forall p\in [1,\infty].
\]

We also consider the critical\footnote{Here  \quotes{critical} refers to the dissipative term $(-\Delta)^\frac12$.} viscous counterpart of \eqref{SQG} which writes
\begin{equation}\label{viscous-SQG}\tag{SQG$_{\nu}$}
  \begin{array}{rcll}
  \partial_t\theta^{\nu} + u^{\nu}\cdot\nabla\theta^{\nu} + \nu (-\Delta)^{\frac12}\theta^{\nu}&=& 0 &\\ 
  u^{\nu} &=& \mathcal R^\perp \theta^{\nu}& \\ 
  \theta^{\nu}(0,\cdot )&=&\theta^{\nu}_0.&
  \end{array}
\end{equation}
Solutions to \eqref{viscous-SQG}, if smooth enough, enjoy the energy balance
\begin{equation}
    \label{eq.viscous-conservation}
    \|\theta^{\nu}(t)\|^2_{\dot{H}_x^{-\mez}} + 2\nu \int_0^t\|\theta^{\nu}(\tau)\|^2_{L^2_x} d\tau = \|\theta^{\nu}_0\|^2_{\dot{H}_x^{-\mez}} \qquad \forall t\geq 0.
\end{equation}

\subsection{Main results}

Our main result is the existence of global weak solutions to \eqref{SQG} from every initial datum with critical\footnote{Here \quotes{critical} refers to the fact that $p=\frac43$ is the smallest value such that $L^p(\T^2)\subset \dot H^{-\frac12}(\T^2)$.} integrability. We will denote by $C^0_{\rm w}([0,\infty);L^p(\T^2))$ the space of continuous-in-time functions with values in $L^p(\T^2)$ endowed with the weak topology.

\begin{theorem}\label{T:global-existence}
Let $\theta_0\in L^\frac43 (\T^2)$ be with zero average. There exists a global-in-time weak solution 
$$
\theta\in C^0([0,\infty);\dot{H}^{-\frac12}(\T^2))\cap C^0_{\rm w}([0,\infty);L^\frac43(\T^2)) 
$$
to \eqref{SQG} in the sense of Definition \ref{def.sqg-solutions} such that 
$$
\int_{\T^2}\theta(t,x)\,dx=0,\quad \|\theta (t)\|_{\hamx}=\|\theta_0\|_{\hamx} \quad\text{and}\quad \|\theta (t)\|_{L^\frac43_x}\leq \|\theta_0\|_{L^\frac43_x}
$$
for all times $t\geq 0$.
\end{theorem}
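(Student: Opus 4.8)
The plan is to construct $\theta$ as a vanishing viscosity limit of solutions to \eqref{viscous-SQG}. First I would fix a sequence $\nu\to 0$ and smooth, mean-zero initial data $\theta_0^\nu$ (for instance mollifications of $\theta_0$) with $\theta_0^\nu\to\theta_0$ in $L^\frac43_x$, $\|\theta_0^\nu\|_{L^\frac43_x}\le\|\theta_0\|_{L^\frac43_x}$ and $\|\theta_0^\nu\|_{\hamx}\to\|\theta_0\|_{\hamx}$. Since the critical dissipative equation \eqref{viscous-SQG} is globally well-posed for smooth data, this yields global smooth solutions $\theta^\nu$. Two families of uniform bounds are then available: the energy balance \eqref{eq.viscous-conservation} gives $\|\theta^\nu(t)\|_{\hamx}\le\|\theta_0^\nu\|_{\hamx}\le C$ together with control of the dissipation $2\nu\int_0^t\|\theta^\nu\|_{L^2_x}^2\,d\tau$; and, because $u^\nu$ is divergence free and $(-\Delta)^\mez$ obeys the C\'ordoba--C\'ordoba inequality, every convex entropy is non-increasing, so $\|\theta^\nu(t)\|_{L^p_x}\le\|\theta_0^\nu\|_{L^p_x}$ for all $p$, in particular a uniform $L^\infty_tL^\frac43_x$ bound.

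The core of the argument is to propagate non-concentration of the $L^\frac43_x$ mass uniformly in $(t,\nu)$. Since $\theta_0^\nu\to\theta_0$ in $L^\frac43_x$, the family $\{|\theta_0^\nu|^\frac43\}$ is equi-integrable, so by de la Vall\'ee--Poussin there is a convex, increasing, superlinear $G$ with $G(0)=0$ and $\sup_\nu\int_{\T^2}G(|\theta_0^\nu|^\frac43)\,dx<\infty$. Because $r\mapsto r^\frac43$ is convex, $\Phi(s):=G(|s|^\frac43)$ is a convex entropy, and the entropy monotonicity above gives $\int_{\T^2}\Phi(\theta^\nu(t))\,dx\le\int_{\T^2}\Phi(\theta_0^\nu)\,dx$ uniformly in $(t,\nu)$. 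By de la Vall\'ee--Poussin again this is exactly uniform equi-integrability of $\{|\theta^\nu(t)|^\frac43\}$: there is a modulus $\omega(M)\to 0$ with $\int_{\{|\theta^\nu(t)|>M\}}|\theta^\nu(t)|^\frac43\,dx\le\omega(M)$ for all $t,\nu$.

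Next I would convert non-concentration into strong compactness in the Hamiltonian norm. Writing $T_M$ for truncation at height $M$ and $P_{>N}$ for the projection onto frequencies $|k|>N$, I split $\theta^\nu=T_M(\theta^\nu)+(\theta^\nu-T_M(\theta^\nu))$: the remainder has $L^\frac43_x$ norm $\le\omega(M)^{3/4}$, hence $\hamx$ norm $\lesssim\omega(M)^{3/4}$ by the critical embedding $L^\frac43_x\hookrightarrow\hamx$, while $\|T_M(\theta^\nu)\|_{L^2_x}^2\le M^{2/3}\|\theta^\nu\|_{L^\frac43_x}^\frac43\le C M^{2/3}$, so $\|P_{>N}T_M(\theta^\nu)\|_{\hamx}^2\le N^{-1}CM^{2/3}$. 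Choosing $M$ then $N$ large makes $\|P_{>N}\theta^\nu\|_{\hamx}$ uniformly small; together with convergence of each fixed Fourier mode (from weak $L^\frac43_x$ convergence) this gives strong $\hamx$ convergence at each time. The bilinear estimate for \eqref{SQG} controlling $\langle\theta^\nu\mathcal R^\perp\theta^\nu,\nabla\phi\rangle$ by $\|\theta^\nu\|_{\hamx}^2$ times a smooth norm of $\phi$ shows that $t\mapsto\langle\theta^\nu(t),\phi\rangle$ is uniformly Lipschitz, giving time-equicontinuity; Arzel\`a--Ascoli then upgrades the pointwise-in-time convergence to $\theta^\nu\to\theta$ in $C^0([0,T];\hamx)$ on every compact interval (hence on $[0,\infty)$ by diagonalization) and, with the uniform $L^\frac43_x$ bound, to $C^0_{\rm w}([0,\infty);L^\frac43_x)$.

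Finally, the same bilinear estimate makes the quadratic nonlinearity continuous under strong $\hamx$ convergence, so passing to the limit in the weak formulation shows that $\theta$ solves \eqref{SQG} in the sense of \cref{def.sqg-solutions}; the bound $\|\theta(t)\|_{L^\frac43_x}\le\|\theta_0\|_{L^\frac43_x}$ follows from weak lower semicontinuity. For the Hamiltonian I would first establish $\|\theta(t)\|_{\hamx}=\|\theta_0\|_{\hamx}$ directly from the limiting solution via the no-anomalous-dissipation/conservation result announced in the abstract, which applies since $\theta\in L^\infty_tL^\frac43_x$ meets the Onsager-type threshold; combined with the strong convergences $\|\theta^\nu(t)\|_{\hamx}\to\|\theta(t)\|_{\hamx}$ and $\|\theta_0^\nu\|_{\hamx}\to\|\theta_0\|_{\hamx}$, the balance \eqref{eq.viscous-conservation} then forces $2\nu\int_0^t\|\theta^\nu\|_{L^2_x}^2\,d\tau\to 0$, i.e.\ no anomalous dissipation in the inviscid limit. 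The main obstacle is precisely the compactness step: because $L^\frac43_x\hookrightarrow\hamx$ is the non-compact endpoint embedding, only the propagated non-concentration of $|\theta^\nu|^\frac43$ and its conversion into the uniform high-frequency bound can rule out concentration and deliver the strong Hamiltonian convergence needed to close the nonlinearity.
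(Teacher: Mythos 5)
Everything in your proposal up to the final paragraph reproduces the paper's own argument: mollification of the datum, global smooth solutions of \eqref{viscous-SQG}, the C\'ordoba--C\'ordoba inequality giving monotonicity of convex entropies, De la Vall\'ee Poussin applied to $G(|\cdot|^{4/3})$ to propagate equi-integrability uniformly in $(t,\nu)$, the truncation-plus-frequency-splitting estimate converting non-concentration into a uniform $\hamx$ tail bound (this is precisely \cref{P: equi int quantitative} and \cref{P:concentration compact}), time equicontinuity of $t\mapsto\int_{\T^2}\theta^\nu\varphi\,dx$ via the commutator estimate, Arzel\`a--Ascoli, and passage to the limit in the nonlinearity using the strong $\hamx$ convergence. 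These steps are correct and essentially identical to the paper's proof of \cref{T:main vanish visc}.

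The last step, however, is a genuine gap. You assert that $\|\theta(t)\|_{\hamx}=\|\theta_0\|_{\hamx}$ can be obtained ``directly from the limiting solution'' because $\theta\in L^\infty_t L^{4/3}_x$ ``meets the Onsager-type threshold''. This is false: the known sufficient condition for Hamiltonian conservation of a weak solution to \eqref{SQG} is $\theta\in L^3([0,T]\times\T^2)$ (see \cite{IV15}), and $L^\infty_t L^{4/3}_x$ is strictly weaker. The paper stresses that its setting is Onsager \emph{supercritical}; indeed the proved Onsager conjecture for \eqref{SQG} produces non-conservative weak solutions at supercritical regularity, so no conservation theorem based on the regularity class of $\theta$ alone can apply here (the abstract's no-anomalous-dissipation results concern the viscous sequence under compactness hypotheses, not inviscid weak solutions). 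Your deduction chain is therefore inverted, and its first link fails. The paper argues in the opposite order: it first proves that the dissipation $\nu\int_0^T\|\theta^\nu(\tau)\|_{L^2_x}^2\,d\tau$ vanishes (\cref{T:compact implies no AD}), using exactly the compactness you established, and only then passes to the limit in \eqref{eq.viscous-conservation} to obtain conservation. That theorem needs two ingredients absent from your proposal: (i) near $t=0$, uniform smallness of the dissipation, obtained by frequency-splitting the initial data (using the strong $\hamx$ compactness of $\{\theta_0^\nu\}_\nu$) combined with the time-equicontinuity estimate; and (ii) on $[\delta,T]$, the additional a priori bound \eqref{eq.higher-bound}, namely $4\nu^2\int_0^\infty t\,\|\theta^\nu(t)\|_{\dot H^{1/2}_x}^2\,dt\leq\|\theta^\nu_0\|_{\hamx}^2$, which lets one write $\nu\|\theta^\nu\|_{L^2_x}^2$ as a pairing of $|\nabla|^{-1/2}\theta^\nu$ with $\nu|\nabla|^{1/2}\theta^\nu$ and exploit the strong $L^2_t\hamx$ convergence together with the weak vanishing of $\nu|\nabla|^{1/2}\theta^\nu$. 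Replacing your final paragraph by this argument closes the proof; the rest of your write-up already supplies all the compactness it requires.
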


Theorem \ref{T:global-existence} is obtained as a consequence of the following no anomalous dissipation and strong compactness result for vanishing viscosity solutions to \eqref{viscous-SQG}. Note that \eqref{viscous-SQG} has a unique global-in-time smooth solution from every smooth initial datum \cite{KNV07}.

\begin{theorem}\label{T:main vanish visc}
Let $\{\theta^\nu\}_{\nu}$ be the sequence of smooth solutions to \eqref{viscous-SQG}  with zero average initial data $\{\theta_0^{\nu}\}_\nu \subset C^\infty (\T^2)$ such that $\{|\theta_0^\nu|^\frac43\}_\nu\subset L^1(\T^2)$ is weakly compact. There exist $\theta_0\in L^\frac43(\T^2)$ and $\theta\in C^0_{\rm w}([0,\infty);L^\frac43(\T^2))$ with $\theta(0)=\theta_0$ such that, up to subsequences, 
\begin{itemize}
    \item[(a)]  $\theta^\nu\overset{*}{\rightharpoonup} \theta$ in $L^\infty([0,\infty);L^\frac43(\T^2))$  and  $\theta^\nu(t)\rightarrow \theta(t)$ in $\ham (\T^2)$ for all $t\geq 0$;
    \item[(b)]  $\theta$ is a weak solution to \eqref{SQG} with initial datum $\theta_0$ in the sense of Definition \ref{def.sqg-solutions};
    \item[(c)]  there is no anomalous dissipation of the Hamiltonian, that is 
    $$
    \lim_{\nu\rightarrow 0}\nu \int_0^T \|\theta^\nu(\tau)\|^2_{L^2_x}\,d\tau=0\qquad \forall T<\infty;
    $$
    \item[(d)] it holds $\|\theta(t)\|_{\hamx}=\|\theta_0\|_{\hamx}$ and $\|\theta (t)\|_{L^\frac43_x}\leq \liminf_{\nu\rightarrow 0} \|\theta^\nu_0\|_{L^\frac43_x}$ for all $t\geq 0$. In particular $\theta\in C^0([0,\infty);\dot H^{-\frac12}(\T^2))$.
\end{itemize}
The strong compactness in $(a)$ is in fact uniform in time (see Remark \ref{R:uniform in time conv}). Moreover, see Theorem \ref{T:compact implies no AD}, the only strong $L^2([0,T];\dot H^{-\frac12}(\T^2))$ compactness of $\{\theta^\nu\}_\nu$, together with the one of $\{\theta^\nu_0\}_\nu$ in $\dot H^{-\frac12}(\T^2)$, is enough to rule out the dissipation of the Hamiltonian in the inviscid limit. In fact, compactness at frequencies $\sim \nu^{-1}$ suffices (see Theorem \ref{T: comp diss scale}).

\begin{remark}
    Theorem \ref{T:main vanish visc} would still hold without the smoothness assumption on the initial data and for any sequence of (appropriately defined) Leray solutions, which are not known to be unique for general $L^\frac{4}{3}(\T^2)$ initial data. However, restricting to smooth initial data is enough for our purposes.
\end{remark}
\end{theorem}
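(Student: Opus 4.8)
The plan is to deduce everything from one central fact: for every fixed $t$ the family $\{\theta^\nu(t)\}_\nu$ is strongly precompact in $\ham$, with the limit identified through weak convergence. First I record the a priori bounds. Since each $\theta^\nu$ solves \eqref{viscous-SQG} with divergence-free drift and dissipation $(-\Delta)^{\frac12}$, the $L^p$ norms are non-increasing, so $\{\theta^\nu\}$ is bounded in $L^\infty([0,\infty);L^{\frac43}(\T^2))$ and hence, by the critical embedding $L^{\frac43}(\T^2)\hookrightarrow\ham(\T^2)$, in $L^\infty([0,\infty);\ham(\T^2))$. Banach–Alaoglu yields a subsequence with $\theta^\nu\overset{*}{\rightharpoonup}\theta$ in $L^\infty_tL^{\frac43}_x$, and the Dunford–Pettis theorem applied to the weakly compact family $\{|\theta_0^\nu|^{\frac43}\}$ gives $\theta_0^\nu\rightharpoonup\theta_0$ in $L^{\frac43}$. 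Writing the equation as $\partial_t\theta^\nu=-\div(u^\nu\theta^\nu)-\nu(-\Delta)^{\frac12}\theta^\nu$ and using $u^\nu\theta^\nu\in L^\infty_tL^{\frac23}_x$ bounds $\partial_t\theta^\nu$ in a fixed negative Sobolev space; this equicontinuity in time upgrades the weak-$*$ convergence to $\theta^\nu(t)\rightharpoonup\theta(t)$ in $L^{\frac43}$ for every $t$ and gives $\theta\in C^0_{\rm w}([0,\infty);L^{\frac43})$.

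The heart of the argument is to propagate the non-concentration of the initial data. By the de la Vallée Poussin characterization of the weak compactness of $\{|\theta_0^\nu|^{\frac43}\}$, there is a convex increasing $\Phi:[0,\infty)\to[0,\infty)$ with $\Phi(s)/s\to\infty$ and $\sup_\nu\int_{\T^2}\Phi(|\theta_0^\nu|^{\frac43})\,dx<\infty$. Set $G(s):=\Phi(|s|^{\frac43})$, which is convex and even. Testing \eqref{viscous-SQG} against $G'(\theta^\nu)$, the transport term vanishes because $\div u^\nu=0$, while the Stroock–Varopoulos inequality gives $\int G'(\theta^\nu)(-\Delta)^{\frac12}\theta^\nu\,dx\geq0$ for convex $G$; hence $t\mapsto\int_{\T^2}G(\theta^\nu(t))\,dx$ is non-increasing. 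Therefore $\sup_{\nu}\sup_{t\geq0}\int_{\T^2}\Phi(|\theta^\nu(t)|^{\frac43})\,dx\leq\sup_\nu\int_{\T^2}\Phi(|\theta_0^\nu|^{\frac43})\,dx<\infty$, so by the converse de la Vallée Poussin the family $\{|\theta^\nu(t)|^{\frac43}\}_{\nu,\,t\geq0}$ is uniformly integrable, uniformly in both $\nu$ and $t$.

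Next I convert this non-concentration into strong $\ham$ compactness. Fix $t$ and split $\theta^\nu(t)=\theta^\nu(t)\mathbbm{1}_{\{|\theta^\nu(t)|\leq M\}}+\theta^\nu(t)\mathbbm{1}_{\{|\theta^\nu(t)|> M\}}$. The tail has $L^{\frac43}$-norm controlled by $\sup_\nu\int_{\{|\theta^\nu(t)|>M\}}|\theta^\nu(t)|^{\frac43}\,dx$, which tends to $0$ as $M\to\infty$ uniformly in $\nu$ by the uniform integrability, and hence is small in $\ham$ by the critical embedding; the truncated part is bounded in $L^2(\T^2)$ and therefore precompact in $\ham$, since $L^2(\T^2)\hookrightarrow\ham(\T^2)$ compactly on the torus. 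A standard total-boundedness argument then makes $\{\theta^\nu(t)\}_\nu$ precompact in $\ham$, and together with the weak limit from the first paragraph this forces $\theta^\nu(t)\to\theta(t)$ strongly in $\ham$ for every $t\geq0$, which is \textbf{(a)}. The time-equicontinuity of $\{\theta^\nu\}$ in $\ham$ combined with this pointwise convergence upgrades, via Arzelà–Ascoli, to convergence uniform on compact time intervals (\cref{R:uniform in time conv}).

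It remains to harvest the consequences. The viscous term in the weak formulation vanishes since $\nu\,\big|\int\!\!\int\theta^\nu(-\Delta)^{\frac12}\varphi\,dx\,dt\big|\lesssim\nu\|\theta^\nu\|_{L^{\frac43}}\to0$, while the quadratic flux $\int\theta^\nu\,\mathcal R^\perp\theta^\nu\cdot\nabla\varphi$ passes to the limit by exploiting the antisymmetric structure of $\mathcal R^\perp$, which renders it a bilinear form continuous under strong $\ham$ times weak $L^{\frac43}$ convergence; this yields \textbf{(b)}. The strong pointwise $\ham$ convergence gives strong $L^2([0,T];\ham)$ compactness of $\{\theta^\nu\}$, whence \textbf{(c)} follows from \cref{T:compact implies no AD} (in fact only compactness at the dissipative frequencies $\sim\nu^{-1}$ is used, \cref{T: comp diss scale}). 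Finally, evaluating the energy balance \eqref{eq.viscous-conservation} and using the strong $\ham$ convergence at times $0$ and $t$ gives $2\nu\int_0^t\|\theta^\nu\|_{L^2}^2\to\|\theta_0\|_{\hamx}^2-\|\theta(t)\|_{\hamx}^2$; by \textbf{(c)} the left side vanishes, so $\|\theta(t)\|_{\hamx}=\|\theta_0\|_{\hamx}$, and the $L^{\frac43}$ bound in \textbf{(d)} is weak lower semicontinuity together with the $L^p$ decay. The main obstacle is the pair of core steps above: the entropy-monotonicity propagation of uniform integrability (relying on the Stroock–Varopoulos inequality for $(-\Delta)^{\frac12}$) and its conversion—via the interplay of the critical embedding $L^{\frac43}\hookrightarrow\ham$ and the compactness of $L^2\hookrightarrow\ham$—into strong compactness in the Hamiltonian norm; the nonlinear passage to the limit additionally hinges on the structural antisymmetry of the SQG flux.
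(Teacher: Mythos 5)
Your proposal follows the paper's own route in all essentials: propagation of equi-integrability by testing with a convex entropy $G'(\theta^\nu)$ and using the C\'ordoba--C\'ordoba/Stroock--Varopoulos positivity (\cref{lem.positivity}), conversion of this non-concentration into pointwise-in-time strong $\ham(\T^2)$ compactness by splitting into a truncated part and a tail (your compact embedding $L^2(\T^2)\hookrightarrow\ham(\T^2)$ argument is exactly the mechanism behind \cref{P: equi int quantitative} and \cref{P:concentration compact}), weak time-continuity via Ascoli--Arzel\`a, the commutator structure for the flux, and then \cref{T:compact implies no AD} plus the energy balance \eqref{eq.viscous-conservation_propos} for items (c)--(d).

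There is, however, one step that fails as written, and it is load-bearing: the time equicontinuity. You claim that $u^\nu\theta^\nu\in L^\infty_t L^{\frac23}_x$ \emph{bounds $\partial_t\theta^\nu$ in a fixed negative Sobolev space}. It does not: $L^{\frac23}(\T^2)$ is only a quasi-Banach space with trivial dual, it is not contained in the space of distributions, and a bound on $\|u^\nu\theta^\nu\|_{L^{\frac23}_x}$ controls no $H^{-k}$ norm of $\div(u^\nu\theta^\nu)$ whatsoever. This is precisely the critical obstruction at $p=\frac43$ (indeed at any $p<2$) that forces the commutator reformulation of the nonlinearity in the first place. Since the pointwise-in-time identification $\theta^\nu(t)\rightharpoonup\theta(t)$ in $L^{\frac43}(\T^2)$, the weak continuity $\theta\in C^0_{\rm w}([0,\infty);L^{\frac43}(\T^2))$, the attainment $\theta(0)=\theta_0$, and the evaluation of the energy balance at fixed times all rest on this equicontinuity, the gap propagates to every item (a)--(d). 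The repair is already in your toolkit: the antisymmetry you invoke later for (b) gives, through the identity \eqref{cant_smile_without_you} and the estimate \eqref{top_of_the_world}, the bound $\left|\frac{d}{dt}\int_{\T^2}\theta^\nu\varphi\,dx\right|\le C\|\varphi\|_{C^3_x}\left(\|\theta^\nu\|_{\hamx}^2+\nu\|\theta^\nu\|_{\hamx}\right)$, uniformly in $\nu$, which is exactly the equicontinuity claim \eqref{Stayin_Alive_Beegees} that the paper proves and uses at this very point. With that substitution your argument is complete and coincides, in structure and in each key lemma, with the paper's proof.
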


\subsection{Related literature and our new contributions} 

The inviscid surface quasi-geostrophic \eqref{SQG} equation and its critical viscous counterpart \eqref{viscous-SQG} have received considerable attention in the mathematical fluid dynamics community. 

The analysis of the two-dimensional inviscid \eqref{SQG} equation was initiated in \cite{ConstantinMajdaTabak1994}, highlighting the  strong analogies between $\nabla^{\perp}\theta$  and the vorticity of the three-dimensional Euler equations. 
As in the Euler case, the possibility of finite-time blow-up for smooth solutions remains a major open problem.
The existence of non-trivial global smooth solutions is itself a significant challenge, see for instance \cite{CastroCordobaGomezSerrano2016}. 
Local-in-time solutions have been constructed in various regularity classes such as $H^s(\mathbb{R}^2)$ for $s>2$ and $C^{1,\alpha}(\mathbb{R}^2)$ for $\alpha >0$, see for instance \cite{ConstantinMajdaTabak1994} in which  a Beale--Kato--Majda blowup criterion has also been derived. We refer to \cite{JeongKimMiura2025} for a recent extension to the half-space setting and to \cites{ConstantinNguyen2018a,ConstantinNguyen2018b} for the study of \eqref{SQG} on general bounded domains. Further interesting questions related to the inviscid \eqref{SQG} equation are treated in \cites{GravejatSmets2017,CastroCordobaGomezSerrano2016,HeKiselev2021}.

Due to the presence of (at least) two formally conserved quantities in \eqref{SQG} at different levels of regularity, namely the $\dot{H}^{-\frac{1}{2}}(\T^2)$ norm, i.e. the Hamiltonian, and any $L^p(\T^2)$ norm, it is natural to expect that turbulent solutions may exhibit a dual energy cascade, as predicted by the Batchelor--Kraichnan theory (see \cites{Constantin1998,C02,ConstantinTarfuleaVicol2014} for more details).
This perspective has motivated a substantial work focused on weak solutions to \eqref{SQG}. For initial data $\theta_0\in L^2(\R^2)$, global-in-time weak solutions in the class $L^{\infty}([0,\infty);L^2(\R^2))$ were first constructed in \cite{Res95} by Resnick. This has been later extended by Marchand in \cite{Mar08a} to the case $\theta_0\in L^p(\R^2)$, for $p>\frac43$,  producing solutions in $L^{\infty}([0,\infty);L^p(\mathbb{R}^2))$. To the best of our knowledge it was not known whether the latter solutions conserve the Hamiltonian, an issue that we handle in the current work.

The viscous system \eqref{viscous-SQG} arises in geophysical studies of strongly rotating fluid flows (see for instance \cite{C02} and references therein). For \eqref{viscous-SQG}, parabolic regularity techniques \`a la De Giorgi, initiated in this context in \cite{CaffarelliVasseur2010}, show that any appropriately defined Leray solution is in fact smooth for all positive times. Alternative proofs of global regularity for \eqref{viscous-SQG} for smooth initial data have been given in \cites{KNV07,KiselevNazarov2010}. We refer  to \cites{Marchand2005,Marchand2006,LazarXue2019} for further results in this direction. 
In the supercritical dissipative case, i.e. when the dissipation in \eqref{viscous-SQG} is given by $(-\Delta)^\alpha$ for $\alpha<\frac12$, it has been established \cite{ConstantinWu2008} that any Leray weak solution which also belongs to $C^{\delta}(\mathbb{R}^2)$ for some $\delta > 1 - 2 \alpha$ on a time-interval $[t_0,T]$ is in fact smooth for all $t \in (t_0,T]$.

The main goal of our work is to construct global weak solutions $\theta \in L^\infty([0,\infty);L^{\frac{4}{3}}(\T^2))$ to \eqref{SQG} that conserve the Hamiltonian. In particular, our Theorem \ref{T:global-existence} extends the works \cites{Res95,Mar08a} mentioned above to the critical integrability $\theta_0\in L^\frac43(\T^2)$, providing solutions which, in addition, conserve the Hamiltonian for all times. As originally noted in \cite{Mar08a}, we leverage on the strong continuity of the nonlinearity in $L^2_{\rm loc}([0,\infty);\dot H^{-\frac12}(\T^2))$. However, since in general the embedding $L^\frac43(\T^2)\subset \dot H^{-\frac12}(\T^2)$ fails to be compact, the strong convergence in the Hamiltonian norm cannot follow from the uniform bound of $\{\theta^\nu\}_\nu$ in $L^\infty([0,\infty);L^\frac43(\T^2))$ alone. We overcome this issue by propagating\footnote{Here the transport-diffusion structure of \eqref{viscous-SQG} plays a fundamental role.} in time the equi-integrability of $\{|\theta^\nu(t)|^\frac43\}_\nu$, from which the strong $L^2_{\rm loc}([0,\infty);\dot H^{-\frac12}(\T^2))$ convergence is deduced via concentration compactness (see Proposition \ref{P:concentration compact}). 
 
Because of the plausible finite-time blow-up for \eqref{SQG}, such solutions cannot be obtained by regularizing the initial datum, and considering the viscous approximation \eqref{viscous-SQG} is necessary. In doing so, the conservation of the Hamiltonian becomes a non-trivial task in view of possible \quotes{anomalous dissipation} phenomena which may arise in the inviscid limit. This issue is handled in Theorem \ref{T:main vanish visc},  and more generally in Theorem \ref{T:compact implies no AD}, ruling out the dissipative anomaly in this context. In particular, we improve on \cite{CIN18}*{Remark 1.5} where the conservation of the Hamiltonian in the inviscid limit was obtained for $L^2(\T^2)$ initial data. These results have similarities with \cites{CLLS16,LMP21,DRP24,JLLL24,ELL_tocome,DeRosaPark2025} obtained in the context of the two-dimensional Navier--Stokes equations. Note that both Theorem \ref{T:main vanish visc} and Theorem \ref{T:compact implies no AD} only assume \quotes{Onsager supercritical} regularity, thus the absence of dissipation cannot follow by the Hamiltonian conservation of the limit which would instead require $\theta\in L^3([0,T]\times \T^2)$ (see \cite{IV15}).  Nonetheless, the Onsager conjecture for \eqref{SQG}, i.e. the existence of infinitely many non-conservative weak solutions, has been recently established in \cites{IsettLooi2024,DaiGiriRadu2024}. Earlier results were previously obtained in \cites{IV15,IsettMa2020,BSV19,BulutHuynhPalasek2023,DaiPeng2023}. See also \cite{CastroFaracoMengualSolera2025} for a non-uniqueness proof without convex integration. It follows that our Theorem \ref{T:compact implies no AD} establishes a sharp discrepancy between the situation in the inviscid case and vanishing viscosity limits in which the only strong compactness in the Hamiltonian norm is inconsistent with its dissipation. This is new in the SQG context and might be a common mechanism of all systems having (at least) two conserved quantities at two different levels of regularity, i.e. a \quotes{double cascade}.

\section{Notations and preliminaries}\label{S:tools}
In this section we set up the notation and recall some basic facts.
\subsection{Sobolev spaces} 
Let $f:\T^2\rightarrow \R$. Denoting by $\hat{f}(n)$ its Fourier coefficients we have
$$
f(x)=\sum_{n \in \mathbb{Z}^2} \hat{f}(n) e^{i n\cdot x}.
$$
For simplicity we will only consider functions with zero average. Therefore $\hat f(0)=0$ and we can sum over $n\in \mathbb{Z}^2\setminus\{0\}$ only. 

For any $s\in \R$ we define the homogeneous Sobolev norm by
$$
\|f\|_{\dot{H}^s}^2 := \sum_{n \in \mathbb{Z}^2\setminus\{0\}} |n|^{2s}|\hat{f}(n)|^2,
$$
which is induced by the inner product 
\begin{equation}
    \label{inner prod H - 12}
    \langle f , g \rangle_{\dot H^{s}}:=\sum_{n \in \mathbb{Z}^2\setminus\{0\}} |n|^{2s} \hat{f}(n) \hat{g}(n).
\end{equation}
For any $\alpha \in \R$ and any zero average function $f:\T^2\rightarrow \R$, the fractional Laplacian $(-\Delta)^\alpha f$ is defined as 
$$
(-\Delta)^\alpha f (x)=|\nabla |^{2\alpha }f(x):= \sum_{n \in \mathbb{Z}^2\setminus\{0\}}|n|^{2\alpha} \hat{f}(n) e^{i n\cdot x}.
$$
It follows
$$
\|f\|_{\dot{H}^s}=\|(-\Delta)^{\frac{s}{2}}f\|_{L^2}= \| |\nabla |^{s } f\|_{L^2} \qquad \forall s\in \R.
$$

\subsection{The C\'ordoba--C\'ordoba inequality}
The following is a direct consequence of the so-called C\'ordoba--C\'ordoba inequality. See for instance \cites{CC03,cordoba2004maximum} and \cite{CM15}*{Theorem 2.1}.

\begin{lemma}\label{lem.positivity} Let $\alpha \in (0,1]$ and $\beta : \mathbb{R} \to \mathbb{R}$ be a convex function, $\beta\in C^1(\R)$. For any $f \in \dot H^{2\alpha}(\T^2) \cap L^\infty(\T^2)$ there holds 
\begin{equation}
    \int_{\T^2} \beta'(f(x))(-\Delta)^{\alpha}f(x)\, dx \geq 0. 
\end{equation}
\end{lemma}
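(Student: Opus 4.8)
The plan is to prove the inequality via the singular-integral representation of the fractional Laplacian together with the monotonicity of $\beta'$. By approximation it suffices to establish the estimate for $f \in C^\infty(\T^2)$ with zero average and $\beta \in C^2(\R)$ convex: the general case $f \in \dot H^{2\alpha}(\T^2)$, $\beta \in C^1(\R)$ then follows by mollifying $f$, regularizing $\beta$, and passing to the limit, using $(-\Delta)^\alpha f \in L^2$ and the continuity of $\beta'$. For such smooth $f$ the composition $\beta'(f)$ is bounded and Lipschitz.

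First I would recall that for $\alpha \in (0,1)$ and zero-average $f$ one has the representation
$$
(-\Delta)^\alpha f(x) = \mathrm{P.V.}\int_{\T^2}\bigl(f(x)-f(y)\bigr)K_\alpha(x-y)\,dy,
$$
where $K_\alpha$ is the periodization of the Riesz kernel $c_\alpha|z|^{-(2+2\alpha)}$; in particular $K_\alpha$ is even, nonnegative, and smooth away from the origin with the indicated diagonal singularity. This is obtained from $\widehat{(-\Delta)^\alpha f}(n) = |n|^{2\alpha}\hat f(n)$ by periodizing the inverse transform of $|n|^{2\alpha}$ on $\R^2$, or can be quoted from \cites{CC03,cordoba2004maximum}. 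Multiplying by $\beta'(f(x))$, integrating, and symmetrizing by exchanging $x$ and $y$ (using $K_\alpha(z) = K_\alpha(-z)$) then yields
$$
\int_{\T^2}\beta'(f(x))(-\Delta)^\alpha f(x)\,dx = \frac12\int_{\T^2}\int_{\T^2}\bigl(\beta'(f(x))-\beta'(f(y))\bigr)\bigl(f(x)-f(y)\bigr)K_\alpha(x-y)\,dy\,dx.
$$
Since $\beta$ is convex, $\beta'$ is nondecreasing, so $(\beta'(a)-\beta'(b))(a-b)\ge 0$ for all $a,b \in \R$; as $K_\alpha \ge 0$, the integrand is pointwise nonnegative and the conclusion follows. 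The endpoint $\alpha = 1$ is handled separately by integration by parts: $\int_{\T^2}\beta'(f)(-\Delta)f = \int_{\T^2}\beta''(f)|\nabla f|^2 \ge 0$.

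The main obstacle is analytic rather than algebraic: justifying the principal-value integral and legitimizing the symmetrization. The difficulty is that $K_\alpha(x-y) \sim |x-y|^{-2-2\alpha}$ is non-integrable near the diagonal, so the single integral $\int \beta'(f(x))(f(x)-f(y))K_\alpha\,dy$ exists only in the principal-value sense, its leading odd singularity being cancelled by the evenness of $K_\alpha$. The gain from symmetrizing is precisely that the resulting factor $(\beta'(f(x))-\beta'(f(y)))(f(x)-f(y))$ is $O(|x-y|^2)$ for Lipschitz $\beta'(f)$, making the symmetrized double integral absolutely convergent for $\alpha<1$ and the symmetrization rigorous; this is where the smoothness of $f$ and the local Lipschitz regularity of $\beta'$ (recovered after regularization) are used.
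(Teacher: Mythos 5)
The paper itself does not prove this lemma: it quotes it as a direct consequence of the pointwise C\'ordoba--C\'ordoba inequality $\beta'(f)(-\Delta)^\alpha f \ge (-\Delta)^\alpha \beta(f)$, citing \cites{CC03,cordoba2004maximum} and \cite{CM15}*{Theorem 2.1}; integrating that pointwise bound over $\T^2$ and using that $(-\Delta)^\alpha\beta(f)$ has zero mean gives the statement. Your argument is a self-contained alternative: you integrate first and then symmetrize, so convexity enters only through monotonicity of $\beta'$ via $(\beta'(a)-\beta'(b))(a-b)\ge 0$, rather than through the supporting-line inequality behind the pointwise version. Both proofs rest on the same two facts --- the representation of $(-\Delta)^\alpha$ on $\T^2$ by a nonnegative, even, periodized kernel, and convexity --- and your smooth-case argument is correct, including the justification of the symmetrization: truncate the principal value at $|x-y|>\eps$, swap variables on that symmetric region, and let $\eps\to 0$, using that the symmetrized integrand is $O(|x-y|^{-2\alpha})$ and hence absolutely integrable in dimension $2$ for $\alpha<1$; the $\alpha=1$ endpoint by parts is also fine. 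The pointwise route buys a stronger, localized statement (useful for maximum principles); your route buys that $\beta(f)$ itself never needs to be defined or integrable.

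One caveat concerns your reduction step. For $\alpha\le\tfrac12$ a function $f\in\dot H^{2\alpha}(\T^2)$ need not be bounded, and since no growth is assumed on the convex function $\beta$, the functions $\beta'(f_\eps)$ need not converge to $\beta'(f)$ in $L^2$, nor be uniformly integrable: ``continuity of $\beta'$'' alone does not justify passing to the limit, and for such $f,\beta$ the integral in the statement need not even be absolutely convergent. The clean fix is to truncate first: prove the inequality for the convex functions $\beta_R$ with $\beta_R'=\max(\min(\beta',R),-R)$, for which boundedness of the derivative makes mollification of $f$ and dominated convergence work, and then let $R\to\infty$ by dominated convergence under the implicit assumption $\beta'(f)(-\Delta)^\alpha f\in L^1(\T^2)$, which is what makes the statement meaningful. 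This gap is fixable, and it is immaterial for the paper's use of the lemma, where it is only ever applied to smooth, bounded $f=\theta^\nu(t)$ --- exactly the case your kernel argument settles rigorously.
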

Note that the above integral is well-defined since $\beta'(f)\in L^\infty(\T^2)$ and $(-\Delta)^{\alpha}f\in L^2(\T^2)$ as soon as $f \in \dot H^{2\alpha}(\T^2)\cap L^\infty(\T^2)$. In fact, Lemma \ref{lem.positivity} will only be applied to the case when $f\in C^\infty(\T^2)$.
\subsection{Equi-integrability and concentration compactness}

We begin with the following quantitative $\dot H^{-\frac12}(\T^2)$ decay of the Fourier series' tail  in terms of the $L^\frac43(\T^2)$ integrability. 

\begin{proposition}
    \label{P: equi int quantitative}
    Let $f\in L^\frac43(\T^2)$. Assume there exists $\beta:\R_+\rightarrow \R_+$ such that $\lim_{r\rightarrow \infty}\frac{\beta(r)}{r}=\infty$ and 
    $$
\int_{\T^2} \beta(|f(x)|^\frac43)\,dx=:M<\infty.
    $$
Denote  $f_{>N}:=\sum_{|n|>N}\hat f(n) e^{i n\cdot x}$. For any $\eps>0$ there exists $R_\eps>0$ such that 
$$
\|f_{>N}\|^2_{\dot H^{-\frac12}}\leq C\left(\frac{R_\eps}{N}+\eps M^\frac32\right) \qquad \forall N>0,
$$
for some geometric constant $C>0$.
\end{proposition}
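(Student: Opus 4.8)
The plan is to split $f$ at a threshold $\lambda>0$, to be fixed depending on $\eps$, into a bounded part $g:=f\,\mathbf{1}_{\{|f|\le\lambda\}}$ and a tail part $h:=f\,\mathbf{1}_{\{|f|>\lambda\}}$, and to estimate the two high-frequency pieces $g_{>N}$ and $h_{>N}$ by different means. The two ingredients I would use are: (i) the critical dual Sobolev embedding $\|v\|_{\dot H^{-\frac12}}\le C\|v\|_{L^{\frac43}}$ on $\T^2$, which is dual to $\dot H^{\frac12}(\T^2)\hookrightarrow L^4(\T^2)$ and whose constant $C$ is geometric; and (ii) the fact that the frequency projection $v\mapsto v_{>N}$ is a contraction on $\dot H^{-\frac12}$, since $\|v_{>N}\|_{\dot H^{-\frac12}}^2=\sum_{|n|>N}|n|^{-1}|\hat v(n)|^2\le\|v\|_{\dot H^{-\frac12}}^2$.

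For the bounded part, I would use the crude estimate $|n|^{-1}\le N^{-1}$ valid on the tail $|n|>N$ together with $g\in L^2(\T^2)$ to get
$$\|g_{>N}\|_{\dot H^{-\frac12}}^2=\sum_{|n|>N}|n|^{-1}|\hat g(n)|^2\le\frac1N\|g\|_{L^2}^2.$$
Since $|f|^2\le\lambda^{\frac23}|f|^{\frac43}$ on $\{|f|\le\lambda\}$, this gives $\|g\|_{L^2}^2\le\lambda^{\frac23}\|f\|_{L^{\frac43}}^{\frac43}$, hence $\|g_{>N}\|_{\dot H^{-\frac12}}^2\le R_\eps/N$ with $R_\eps:=\lambda_\eps^{\frac23}\|f\|_{L^{\frac43}}^{\frac43}$ once $\lambda=\lambda_\eps$ is fixed below. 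This accounts for the first term.

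For the tail part, I would combine the contraction property, the dual Sobolev embedding and the superlinear growth of $\beta$:
$$\|h_{>N}\|_{\dot H^{-\frac12}}^2\le\|h\|_{\dot H^{-\frac12}}^2\le C^2\|h\|_{L^{\frac43}}^2=C^2\Big(\int_{\{|f|>\lambda\}}|f|^{\frac43}\,dx\Big)^{\frac32}.$$
The key quantitative step is a de la Vallée--Poussin type estimate for the integral on the right. Setting $\omega(s):=\sup_{r>s}\frac{r}{\beta(r)}$, the hypothesis $\beta(r)/r\to\infty$ forces $\omega(s)\to0$ as $s\to\infty$, and on $\{|f|>\lambda\}=\{|f|^{\frac43}>\lambda^{\frac43}\}$ we have $|f|^{\frac43}\le\omega(\lambda^{\frac43})\,\beta(|f|^{\frac43})$, so that
$$\int_{\{|f|>\lambda\}}|f|^{\frac43}\,dx\le\omega(\lambda^{\frac43})\int_{\T^2}\beta(|f|^{\frac43})\,dx=\omega(\lambda^{\frac43})\,M.$$
Hence $\|h_{>N}\|_{\dot H^{-\frac12}}^2\le C^2\,\omega(\lambda^{\frac43})^{\frac32}M^{\frac32}$, and choosing $\lambda=\lambda_\eps$ large enough that $\omega(\lambda_\eps^{\frac43})^{\frac32}\le\eps$ --- possible since $\omega\to0$ --- yields the $\eps M^{\frac32}$ term.

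Finally I would combine the two estimates by the triangle inequality $\|f_{>N}\|_{\dot H^{-\frac12}}^2\le2\|g_{>N}\|_{\dot H^{-\frac12}}^2+2\|h_{>N}\|_{\dot H^{-\frac12}}^2$ and absorb the geometric constants into a single $C$, with $R_\eps=\lambda_\eps^{\frac23}\|f\|_{L^{\frac43}}^{\frac43}$; note that $\lambda_\eps$ depends only on $\eps$ and $\beta$ and is independent of $N$, so the bound holds simultaneously for all $N>0$. The only genuinely nontrivial point is the quantitative equi-integrability in step (ii): one must extract from the mere superlinearity of $\beta$ a decaying modulus $\omega$, and check that the power $\frac32$ produced by squaring the dual Sobolev inequality lines up exactly with the $M^{\frac32}$ appearing in the statement. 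Everything else is elementary splitting.
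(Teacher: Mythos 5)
Your proof is correct and takes essentially the same route as the paper's: both split $f$ at a height threshold into a bounded part, estimated by $N^{-1}\|\cdot\|_{L^2}^2$, and a tail part, estimated via the embedding $L^{\frac43}(\T^2)\subset \dot H^{-\frac12}(\T^2)$ combined with the quantitative superlinearity of $\beta$. The only cosmetic differences are that you bound the $L^2$ norm of the bounded part by $\lambda_\eps^{\frac23}\|f\|_{L^{4/3}}^{4/3}$ instead of the pointwise bound $|\T^2|\lambda_\eps^2$, and that you package the superlinearity into a decaying modulus $\omega$ rather than choosing the threshold $R_\eps$ directly from the limit hypothesis.
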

\begin{proof}
    Let $\eps>0$. Choose $R_\eps>0$ large enough such that 
    \begin{equation}
        \label{beta small}
        \frac{r}{\beta(r)}<\eps \qquad \forall r\geq R_\eps.
    \end{equation}
    Set $F_\eps:=\left\{x\in \T^2\, :\, |f(x)|\leq R_\eps\right\}$. Then $f=f\big|_{F_\eps} +f\big|_{F^c_\eps}=:f_{1,\eps} +f_{2,\eps}$. In particular $\hat f(n) = \hat f_{1,\eps}(n) +\hat f_{2,\eps}(n)$ for all $n\in \Z^2$. Thus
    \begin{align}
        \|f_{>N}\|^2_{\dot H^{-\frac12}}&=\sum_{|n|>N} |n|^{-1} |\hat f(n)|^2\\
        &\leq 2\left(\sum_{|n|>N} |n|^{-1} |\hat f_{1,\eps}(n)|^2  +\sum_{|n|>N} |n|^{-1} |\hat f_{2,\eps}(n)|^2\right)\\
        &\leq 2\left(\frac{\|f_{1,\eps}\|^2_{L^2}}{N}+\|f_{2,\eps}\|^2_{\ham}\right)\\
        &\leq 2\left(|\T^2|\frac{R_\eps^2}{N}+\|f_{2,\eps}\|^2_{L^\frac43}\right),
    \end{align}
    where in the last inequality we have used the point-wise bound $|f_{1,\eps}|\leq R_\eps$ and the Sobolev embedding $L^\frac43(\T^2)\subset \ham (\T^2)$. Moreover, by \eqref{beta small} we get
    $$
    \int_{\T^2} |f_{2,\eps}(x)|^\frac43\,dx= \int_{F^c_\eps} |f(x)|^\frac43\,dx= \int_{F^c_\eps} \frac{|f(x)|^\frac43}{\beta(|f(x)|^\frac43)}\beta(|f(x)|^\frac43)\,dx\leq \eps M,
    $$
    concluding the proof.
\end{proof}
The corresponding concentration compactness result directly follows.
\begin{proposition}
    \label{P:concentration compact}
    Let $\{f_j\}_j\subset L^\frac43(\T^2)$ be a bounded sequence such that $f_j\overset{*}{\rightharpoonup}f$ in $\ham (\T^2)$. If $\{|f_j|^\frac43\}_j\subset L^1(\T^2)$ is weakly compact, then $f_j\rightarrow f$ in $\ham (\T^2)$.
\end{proposition}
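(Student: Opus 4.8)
The plan is to combine the quantitative tail estimate of \cref{P: equi int quantitative} with a low/high frequency splitting, the crucial point being to produce a \emph{single} superlinear weight $\beta$ that works uniformly along the whole sequence. Since $\{|f_j|^\frac43\}_j$ is weakly compact in $L^1(\T^2)$, the Dunford--Pettis theorem shows this family is equi-integrable, and then the de la Vallée Poussin lemma furnishes a convex increasing function $\beta:\R_+\rightarrow\R_+$ with $\lim_{r\to\infty}\beta(r)/r=\infty$ together with a constant $M<\infty$ such that
$$
\sup_j \int_{\T^2}\beta(|f_j(x)|^\frac43)\,dx\leq M.
$$
This is precisely the hypothesis needed to apply \cref{P: equi int quantitative} to every $f_j$ with the same $\beta$ and $M$: for each $\eps>0$ there is $R_\eps>0$ with
$$
\|(f_j)_{>N}\|_{\ham}^2\leq C\Big(\frac{R_\eps}{N}+\eps M^\frac32\Big)\qquad \forall j,\ \forall N>0.
$$

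Next, for a fixed $N$ I would write $f_j-f=(f_j-f)_{\leq N}+(f_j)_{>N}-f_{>N}$, where the subscripts denote the Fourier truncations to frequencies $|n|\leq N$ and $|n|>N$. The high-frequency part of $f_j$ is controlled uniformly by the displayed bound. For the high-frequency part of the limit, I note that $\ham(\T^2)$ is a Hilbert space, so weak-$*$ and weak convergence coincide, and the projection onto frequencies $|n|>N$ is orthogonal; hence $(f_j)_{>N}\rightharpoonup f_{>N}$ weakly in $\ham$, and weak lower semicontinuity of the norm gives $\|f_{>N}\|_{\ham}\leq\liminf_j\|(f_j)_{>N}\|_{\ham}$, so $f_{>N}$ obeys the same bound (in particular one never needs an $L^\frac43$ or $\beta$-bound on $f$ itself). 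The low-frequency part $(f_j-f)_{\leq N}$ is a finite Fourier sum: testing the weak-$*$ convergence against each $e^{in\cdot x}\in\dot H^{\frac12}(\T^2)$ gives $\hat f_j(n)\to\hat f(n)$ for every $n$, whence
$$
\|(f_j-f)_{\leq N}\|_{\ham}^2=\sum_{0<|n|\leq N}|n|^{-1}|\hat f_j(n)-\hat f(n)|^2\longrightarrow 0\qquad (j\to\infty)
$$
for each fixed $N$, being a finite sum of vanishing terms.

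Finally I would assemble the estimates in the correct order of limits, which is the conceptual heart of the argument. Given $\delta>0$, first choose $\eps$ so that $C\eps M^\frac32<\delta$, freezing $\eps$ and hence $R_\eps$; then choose $N$ so large that $CR_\eps/N<\delta$; with $N$ now fixed, let $j\to\infty$ so the low-frequency term vanishes. This yields $\limsup_j\|f_j-f\|_{\ham}^2\lesssim\delta$, and letting $\delta\to0$ concludes. The main obstacle is not any individual estimate but securing the uniformity in $j$ of the tail bound: this is exactly what the de la Vallée Poussin weight provides, and it is the reason why mere boundedness of $\{f_j\}_j$ in $L^\frac43(\T^2)$—which permits concentration and the consequent loss of strong compactness in $\ham(\T^2)$—does not suffice.
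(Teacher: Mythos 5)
Your proof is correct and follows essentially the same route as the paper: the De la Vall\'ee Poussin weight giving a uniform superlinear bound, \cref{P: equi int quantitative} applied uniformly in $j$, a low/high frequency splitting with the low frequencies handled by weak-$*$ convergence of Fourier coefficients, and the same order of limits. The only (harmless) difference is that you bound $\|f_{>N}\|_{\ham}$ by weak lower semicontinuity under the orthogonal frequency projection, whereas the paper keeps that term and lets $N\rightarrow\infty$, using that the tail of the fixed function $f\in\ham(\T^2)$ vanishes; both are valid.
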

\begin{proof}
    We denote 
    \begin{equation}\label{cuts freq}
    f_{\leq N}(x) := \sum_{|n|\leq N} \hat f(n) e^{i n\cdot x} \qquad \text{and}\qquad f_{> N}(x) := \sum_{|n|> N} \hat f(n) e^{i n\cdot x}.
        \end{equation}
    For any $N>0$ we split 
    \begin{align}
        \|f_j-f\|^2_{\ham} &= \|(f_j-f)_{\leq N}\|^2_{\ham} + \|(f_j-f)_{>N}\|^2_{\ham}\\ 
        & \leq \|(f_j-f)_{\leq N}\|^2_{\ham} + 2 \left( \|(f_j)_{>N}\|^2_{\ham} + \|f_{>N}\|^2_{\ham}\right).
    \end{align}
    Since $\{|f_j|^\frac43\}_j\subset L^1(\T^2)$ is weakly compact, by the De la Vall\'ee Poussin criterion (see for instance \cite{K08}*{Theorem 6.19}) we find a function $\beta:\R_+\rightarrow \R_+$ such that $\lim_{r\rightarrow \infty}\frac{\beta(r)}{r}=\infty$ and 
    $$
    \sup_{j\geq 1} \int_{\T^2} \beta(|f_j(x)|^\frac43)\,dx<\infty.
    $$
    Let $\eps>0$. By Proposition \ref{P: equi int quantitative} we deduce 
    $$
     \|f_j-f\|^2_{\ham}\leq \|(f_j-f)_{\leq N}\|^2_{\ham} +C\left(\frac{R_\eps}{N}+\eps\right) + 2\|f_{>N}\|^2_{\ham}.
    $$
    Since $(f_j)_{\leq N}\rightarrow f_{\leq N}$ in $\ham (\T^2)$ for any fixed $N$, we obtain 
    $$
    \limsup_{j\rightarrow \infty} \|f_j-f\|^2_{\ham}\leq C\left(\frac{R_\eps}{N}+\eps\right) + 2\|f_{>N}\|^2_{\ham}.
    $$
    By letting $N\rightarrow \infty$ we deduce 
    $$
     \limsup_{j\rightarrow \infty} \|f_j-f\|^2_{\ham}\leq C\eps.
    $$
   The arbitrariness of $\eps>0$ concludes the proof.
\end{proof}

\subsection{Weak solutions to \eqref{SQG} and \eqref{viscous-SQG}} 
In order to treat the nonlinear term in the weak formulation with low regularity of $\theta$, let us first observe the following cancellation, which was already reported in \cites{Res95,Mar08a}. Let  $\varphi\in C_c^\infty(\mathbb{T}^2)$ be any test function. We have
 \begin{align}
 \int_{\mathbb{T}^2} \theta u\cdot \nabla \varphi \,dx  &= \int_{\mathbb{T}^2}  \mathcal{R}^\perp\theta \cdot \nabla \varphi (-\Delta)^{\frac{1}{2}}(-\Delta)^{-\frac{1}2}\theta\, dx \\
 & = \int_{\mathbb{T}^2} \mathcal{R}^\perp \theta \cdot [\nabla \varphi , (-\Delta)^{\frac{1}{2}}](-\Delta)^{-\frac{1}{2}}\theta \,dx \\
 &\quad +  \int_{\mathbb{T}^2} \mathcal{R}^\perp \theta \cdot (-\Delta)^{\frac{1}{2}} \left( \nabla \varphi (-\Delta)^{-\frac{1}{2}}\theta \right) dx,
 \end{align}
 where $[\cdot,\cdot]$ is the usual commutator symbol. The last term can be further computed as
 \begin{align}
 \int_{\mathbb{T}^2} \mathcal{R}^\perp \theta \cdot (-\Delta)^{\frac{1}{2}} \left( \nabla \varphi (-\Delta)^{-\frac{1}{2}}\theta \right) \,dx &= \int_{\mathbb{T}^2}(-\Delta)^{-\frac{1}{2}}\theta \nabla \varphi \cdot \nabla^\perp \theta \,dx\\
 & = -\int_{\mathbb{T}^2} \theta \nabla^\perp (-\Delta)^{-\frac{1}{2}}\theta \cdot \nabla \varphi  \,dx\\
  & = - \int_{\mathbb{T}^2} \theta u \cdot \nabla \varphi  \,dx.
 \end{align} 
 Thus, we arrive at
 \begin{align}
 \int_{\mathbb{T}^2} \theta u\cdot\nabla \varphi\, dx &= \frac{1}{2} \int_{\mathbb{T}^2} \mathcal{R}^\perp \theta \cdot [\nabla \varphi , (-\Delta)^{\frac{1}{2}}](-\Delta)^{-\frac{1}{2}}\theta\, dx\\
 &=:- \frac12 \left\langle \mathcal R^\perp_i \theta ,[(-\Delta)^{\mez},\partial_i \varphi  ](-\Delta)^{-\mez}\theta \right\rangle.\label{cant_smile_without_you}
 \end{align}
This expression for the nonlinear term motivates the following definition.

\begin{definition}[Weak solutions to \eqref{SQG}]\label{def.sqg-solutions} Let $\theta_0\in \dot H^{-\frac12}(\T^2)$ be with zero average. We say that $\theta \in L^{2}_{\rm loc}([0,\infty); \dot H^{-\frac12}(\T^2))$, with $\langle \theta(t),1\rangle=0$ for a.e. $t\geq 0$, is a weak solution to \eqref{SQG} if 
\begin{equation}
    \label{eq.sqg-weak-sol}
    \int_0^\infty \left\langle \theta (t),\partial_t\varphi (t)\right\rangle dt -\mez  \int_0^\infty \left\langle \mathcal R^\perp_i \theta (t),[(-\Delta)^{\mez},\partial_i \varphi (t) ](-\Delta)^{-\mez}\theta(t) \right\rangle dt   = -\left\langle\theta_0,\varphi(0)\right\rangle
\end{equation}
for all $\varphi \in C^{\infty}_c([0,\infty)\times \T^2)$, where $\mathcal R^\perp=\nabla^\perp(-\Delta)^{-\mez}$ and the brackets $\langle\cdot, \cdot\rangle$ denote the duality paring $ \dot H^{-\frac12}(\T^2)-\dot H^{\frac12}(\T^2)$.
\end{definition}

The above definition makes sense in view of the following.
\begin{remark}\label{R:nonlinearity well defined}
    As first noted in \cite{Mar08a} in the whole space setting, it can be proved \cite{BSV19}*{Lemma A.5} that the operator $[(-\Delta)^{\mez},\nabla \varphi]$ is of order zero and bounded on every $L^p(\T^2)$ and $\dot H^s(\T^2)$ for any $p\in (1,\infty)$ and any $s\in (-1,1)$. For instance, it holds
    \begin{align}
    \norm{ [(-\Delta)^{\mez},\nabla  \varphi  ] f}_{\dot{H}^{\frac{1}{2}}}&\le C\rVert \varphi\rVert_{C^3} \rVert f\rVert_{\dot{H}^{\frac{1}2}}.
    \end{align}
        In particular
   \begin{align}
\left|\left\langle \mathcal R^\perp_i \theta ,[(-\Delta)^{\mez},\partial_i \varphi  ](-\Delta)^{-\mez}\theta \right\rangle\right| &\le  C\rVert \varphi\rVert_{C^3} \norm{\mathcal{R}^\perp \theta}_{\dot{H}^{-\frac{1}{2}}}\norm{ (-\Delta)^{-\frac{1}2}\theta}_{\dot H^{\frac{1}2}}\\&\le C\rVert \varphi\rVert_{C^3} \rVert \theta\rVert_{\dot H^{-\frac{1}{2}}}^2,   \label{top_of_the_world}
\end{align}
   which ensures that the nonlinear term in \eqref{eq.sqg-weak-sol} is a well-defined continuous bilinear operator on $ L^2_{\rm loc}([0,\infty);\dot H^{-\frac{1}{2}}(\T^2))$. 
\end{remark}

We turn to the definition of weak solutions to \eqref{viscous-SQG}. 

\begin{definition}[Leray solutions to \eqref{viscous-SQG}]\label{def.viscous-sqg-sol}
Let $\nu>0$ and $\theta_0^\nu\in \dot H^{-\frac12}(\T^2)$ be with zero average. We say that $\theta^\nu$ is a Leray solution to \eqref{viscous-SQG} if 
\begin{itemize}
    \item[(i)] $\theta^\nu \in C^0([0,\infty);\dot H^{-\frac12}(\T^2))\cap L^2([0,\infty)\times \T^2)$ and 
    \begin{equation}\label{weak sol viscous}
    \int_0^\infty \int_{\T^2}\theta^\nu \left( \partial_t\varphi +\mathcal R^\perp \theta^\nu \cdot \nabla \varphi - \nu (-\Delta)^\frac12 \varphi\right)  dxdt= - \int_{\T^2}\theta_0^\nu(x)\varphi(x,0)\,dx
    \end{equation}
    for all $\varphi\in C^\infty_c([0,\infty)\times \T^2)$;
    \item[(ii)] it holds
    \begin{equation}
    \label{eq.viscous-conservation_propos}
    \|\theta^{\nu}(t)\|^2_{\dot{H}_x^{-\mez}} + 2\nu \int_0^t\|\theta^{\nu}(\tau)\|^2_{L^2_x} d\tau = \|\theta_0^\nu\|^2_{\dot{H}_x^{-\mez}} \qquad \forall t\geq 0;
\end{equation}
  \item[(iii)] it holds 
  \begin{equation}
    \label{eq.higher-bound}
    4\nu^{2}\int_0^\infty t\|\theta^{\nu}(t)\|^2_{\dot H^{\frac12}_x}dt \leq \|\theta_0^\nu\|^2_{\hamx}.
\end{equation}
\end{itemize}
\end{definition}
The reader may notice that the latter property \eqref{eq.higher-bound} is not commonly required in the definition of Leray solutions. However, it comes naturally as a uniform bound of any reasonable approximate sequence and it will be needed for our later analysis. By regularization and compactness arguments, Leray solutions exist. We provide the proof since we could not find the precise statement elsewhere. In fact, we are not aware of any result constructing Leray solutions satisfying the energy equality for a general initial datum with finite Hamiltonian. On the whole space, solutions satisfying the energy inequality were previously constructed in \cite{Mar08a} for any $\theta_0^\nu\in \dot H^{-\frac12}(\R^2)$. See also \cites{CI17,CIN18} for the bounded domain setting with $\theta_0^\nu\in L^2(\Omega)$. Uniqueness can be obtained under additional assumptions \cites{CW99,Mar08a,M08} but it is not known in general.

\begin{proposition}\label{P: Laray exists}
For any $\nu>0$ and $\theta_0^\nu\in \dot H^{-\frac12}(\T^2)$ with zero average there exists a Leray solution to \eqref{viscous-SQG} in the sense of Definition \ref{def.viscous-sqg-sol}.
\end{proposition}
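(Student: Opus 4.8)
\textit{Proof proposal.} The plan is to construct the Leray solution by regularizing the initial datum and exploiting the global well-posedness of \eqref{viscous-SQG} for smooth data. First I would fix a sequence of smooth zero-average approximations $\theta_{0,k}\in C^\infty(\T^2)$ with $\theta_{0,k}\to\theta_0^\nu$ in $\dot H^{-\frac12}(\T^2)$ (e.g. by periodic mollification, so that $\|\theta_{0,k}\|_{\dot H^{-\frac12}}\to\|\theta_0^\nu\|_{\dot H^{-\frac12}}$), and let $\theta_k$ be the corresponding unique global smooth solution provided by \cite{KNV07}. On smooth solutions all the balances are rigorous: testing the equation with $\theta_k$ in $\dot H^{-\frac12}(\T^2)$ and using that the transport nonlinearity conserves the Hamiltonian yields \eqref{eq.viscous-conservation_propos} for $\theta_k$, while testing in $L^2(\T^2)$ (where the nonlinearity vanishes because $u_k$ is divergence free) gives $\frac12\frac{d}{dt}\|\theta_k\|^2_{L^2}+\nu\|\theta_k\|^2_{\dot H^{\frac12}}=0$. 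Multiplying the latter by $t$, integrating and discarding the nonnegative boundary term at infinity produces $\nu\int_0^\infty t\|\theta_k\|^2_{\dot H^{\frac12}}\,dt\le\frac12\int_0^\infty\|\theta_k\|^2_{L^2}\,dt\le\frac1{4\nu}\|\theta_{0,k}\|^2_{\dot H^{-\frac12}}$, which is exactly \eqref{eq.higher-bound}. These identities furnish uniform-in-$k$ bounds for $\theta_k$ in $L^\infty([0,\infty);\dot H^{-\frac12}(\T^2))$, in $L^2([0,\infty)\times\T^2)$, and in the weighted space $L^2(t\,dt;\dot H^{\frac12}(\T^2))$.

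Next I would extract a limit with enough compactness to pass to the nonlinear term. From \eqref{eq.higher-bound} the family $\{\theta_k\}$ is bounded in $L^2([\delta,T];\dot H^{\frac12}(\T^2))$ for every $0<\delta<T$, and from the equation $\partial_t\theta_k=-\div(u_k\theta_k)-\nu(-\Delta)^{\frac12}\theta_k$ together with the bilinear estimate in \cref{R:nonlinearity well defined} the time derivatives $\partial_t\theta_k$ are bounded in, say, $L^1([\delta,T];H^{-3}(\T^2))$. Since on $\T^2$ the embedding $\dot H^{\frac12}(\T^2)\hookrightarrow\hookrightarrow\dot H^{-\frac12}(\T^2)\hookrightarrow H^{-3}(\T^2)$ is compact, the Aubin--Lions--Simon lemma gives, after a diagonal extraction over $\delta\downarrow0$, $T\uparrow\infty$, a subsequence with $\theta_k\to\theta$ strongly in $L^2_{\rm loc}((0,\infty);\dot H^{-\frac12}(\T^2))$, weakly in $L^2(t\,dt;\dot H^{\frac12})$, weakly-$*$ in $L^\infty_t\dot H^{-\frac12}$ and weakly in $L^2([0,\infty)\times\T^2)$. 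The strong $L^2_{\rm loc}\dot H^{-\frac12}$ convergence is precisely what the continuity of the bilinear operator in \cref{R:nonlinearity well defined} requires, so I can pass to the limit in \eqref{weak sol viscous}, the viscous term being linear in the weakly convergent $\theta_k$; this yields \eqref{weak sol viscous} for $\theta$, and \eqref{eq.higher-bound} passes by weak lower semicontinuity of the weighted norm.

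The delicate point is upgrading the resulting energy \emph{inequality} to the \emph{equality} \eqref{eq.viscous-conservation_propos}. Interpolating $\|\cdot\|_{L^2}\le\|\cdot\|_{\dot H^{-\frac12}}^{\frac12}\|\cdot\|_{\dot H^{\frac12}}^{\frac12}$ and using the strong $L^2_{\rm loc}\dot H^{-\frac12}$ convergence against the weighted $\dot H^{\frac12}$ bound upgrades the convergence to $\theta_k\to\theta$ strongly in $L^2([s,t]\times\T^2)$ for every $0<s<t$. Together with the strong $\dot H^{-\frac12}$ convergence of $\theta_k(\tau)$ at a.e.\ $\tau$, this lets me pass the exact balance of $\theta_k$ to the limit on $[s,t]$, obtaining $\|\theta(t)\|^2_{\dot H^{-\frac12}}+2\nu\int_s^t\|\theta\|^2_{L^2}=\|\theta(s)\|^2_{\dot H^{-\frac12}}$ first for a.e.\ and then, by continuity of the right-hand side, for all $0<s\le t$. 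The main obstacle is the endpoint $s\downarrow0$, where $\theta$ is only $\dot H^{-\frac12}$ and no uniform $L^2$ control near the initial time is available. I would resolve it by noting that $\int_0^s\|\theta\|^2_{L^2}\to0$ since $\theta\in L^2([0,\infty)\times\T^2)$, so $\|\theta(s)\|^2_{\dot H^{-\frac12}}$ converges monotonically to some $E$ as $s\downarrow0$; weak lower semicontinuity of the $\dot H^{-\frac12}$ norm applied to $\theta_k(s)$ gives $\|\theta(s)\|_{\dot H^{-\frac12}}\le\|\theta_0^\nu\|_{\dot H^{-\frac12}}$, hence $E\le\|\theta_0^\nu\|^2_{\dot H^{-\frac12}}$, while the weak continuity of $\theta$ at $0$ with $\theta(0)=\theta_0^\nu$ (encoded in \eqref{weak sol viscous}) gives the reverse bound $E\ge\|\theta_0^\nu\|^2_{\dot H^{-\frac12}}$. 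This forces $E=\|\theta_0^\nu\|^2_{\dot H^{-\frac12}}$ and hence \eqref{eq.viscous-conservation_propos} for all $t\ge0$. Finally, the energy equality makes $t\mapsto\|\theta(t)\|_{\dot H^{-\frac12}}$ continuous, which combined with weak continuity yields $\theta\in C^0([0,\infty);\dot H^{-\frac12}(\T^2))$, completing the verification of \cref{def.viscous-sqg-sol}.
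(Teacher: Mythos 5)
Your construction follows the paper's own proof almost step by step: mollification of the datum, the global smooth solutions of \cite{KNV07}, the Hamiltonian balance, the weighted bound \eqref{eq.higher-bound} (your multiplication by $t$ is exactly the paper's integration over $s\in[0,t]$), Aubin--Lions compactness, passage to the limit in \eqref{weak sol viscous} via the commutator form of the nonlinearity, and the $s\downarrow 0$ endpoint argument combining weak continuity at $t=0$ with lower semicontinuity, which is verbatim the paper's. Two remarks on the compactness step: since your Aubin--Lions argument uses the $\dot H^{\frac12}$ bound, which is only available on $[\delta,T]$, your strong convergence lives in $L^2_{\rm loc}((0,\infty);\dot H^{-\frac12}(\T^2))$ and does not reach $t=0$; to pass to the limit in the nonlinear term for test functions supported near $t=0$ you must add that the contribution of $[0,\delta]$ is $O(\delta)$ uniformly in $k$, by \eqref{top_of_the_world} and the uniform $L^\infty_t \dot H^{-\frac12}_x$ bound. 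This is harmless. The genuine problem comes later.

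The gap is the sentence claiming the energy balance ``first for a.e.\ and then, by continuity of the right-hand side, for all $0<s\le t$''. Strong convergence in $L^2_{\rm loc}((0,\infty);\dot H^{-\frac12}(\T^2))$ only gives $\theta_k(\tau)\to\theta(\tau)$ in $\dot H^{-\frac12}(\T^2)$ for \emph{almost every} $\tau$, so you obtain the balance only for a.e.\ pair $s\le t$. Promoting it to all times would require knowing that $t\mapsto\|\theta(t)\|^2_{\dot H^{-\frac12}}$ is continuous, which is precisely what you are trying to prove: your own logic derives that continuity \emph{from} the all-times energy equality, so the argument is circular. At an exceptional time $t_0$, the weakly continuous representative encoded in \eqref{weak sol viscous} only yields the one-sided bound $\|\theta(t_0)\|^2_{\dot H^{-\frac12}}\le \liminf_{s\to t_0}\|\theta(s)\|^2_{\dot H^{-\frac12}}$, so an instantaneous, immediately recovered drop of the Hamiltonian on a null set of times is not excluded, and both items (i) and (ii) of \cref{def.viscous-sqg-sol} require ruling this out at \emph{every} time. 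The paper closes exactly this hole by proving strong convergence at every positive time: from the monotonicity of $\|\theta_k(t)\|_{L^2}$ one gets $t\|\theta_k(t)\|^2_{L^2_x}\le \frac{1}{2\nu}\|\theta_0^\nu\|^2_{\dot H^{-\frac12}_x}$, hence a uniform $L^\infty_{\rm loc}((0,\infty);L^2(\T^2))$ bound (see \eqref{second uniform bound}), and Aubin--Lions then upgrades the convergence to $C^0_{\rm loc}((0,\infty);\dot H^{-\frac12}(\T^2))\cap L^2_{\rm loc}((0,\infty);L^2(\T^2))$ as in \eqref{strong local time}. With that, every term of the balance converges for every $t\ge s>0$, giving \eqref{balance for positive s} at all positive times, after which your endpoint argument at $s\downarrow 0$ concludes as written. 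You already have all the ingredients for this fix; without it, the proof as stated does not deliver the equality \eqref{eq.viscous-conservation_propos} for all $t\ge 0$.
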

\begin{proof}
 Since $\nu>0$ will be fixed, we will drop its notation from the superscripts. Consider $\theta_{0,\eps}:=\theta_0*\rho_\eps\in C^\infty(\T^2)$. By \cite{KNV07} we obtain a global-in-time smooth solution $\theta^\eps$ to \eqref{viscous-SQG} with initial datum $\theta_{0,\eps}$. Note that $\int_{\T^2}\theta^\eps(t,x)\,dx=0$ for all $t\geq 0$. By taking the $\dot H^{-\frac12}(\T^2)$ inner product (see \eqref{inner prod H - 12}) of the first equation in \eqref{viscous-SQG} and $\theta^\eps$ itself we obtain 
 $$
 \frac{d}{dt}\|\theta^\eps(t)\|^2_{\hamx}=-2\nu \|\theta^\eps(t)\|^2_{L^2_x}\qquad \forall t \geq 0,
 $$
 from which 
 $$
 \|\theta^\eps(t)\|^2_{\hamx}+2\nu \int_0^t \|\theta^\eps(\tau )\|^2_{L^2_x}\,d\tau = \|\theta_{0,\eps}\|^2_{\hamx}\qquad \forall t \geq 0.
 $$
 It follows that 
 \begin{equation}
     \label{first uniform bound}
     \{\theta^\eps\}_\eps\subset L^\infty([0,\infty);\dot H^{-\frac12}(\T^2))\cap L^2([0,\infty)\times \T^2)\qquad \text{is bounded.}
 \end{equation}
 Similarly, multiplying the first equation in \eqref{viscous-SQG} by $\theta^\eps$ we get 
 $$
 2\nu \int_s^t\|\theta^\eps(\tau)\|^2_{\dot H^{\frac12}_x}\,d\tau\leq \|\theta^\eps(t)\|^2_{L^2_x} + 2\nu \int_s^t\|\theta^\eps(\tau)\|^2_{\dot H^{\frac12}_x}\,d\tau=\|\theta^\eps(s)\|^2_{L^2_x}
 $$
 for all $t\geq s\geq 0$. Integrating for $s\in [0,t]$ we obtain 
 \begin{align}
 2\nu \int_0^t \tau \|\theta^\eps(\tau)\|^2_{\dot H^{\frac12}_x}\,d\tau&=2\nu \int_0^t\left(\int_s^t\|\theta^\eps(\tau)\|^2_{\dot H^{\frac12}_x}\,d\tau\right)ds\\
 &\leq \int_0^t \|\theta^\eps(s)\|^2_{L^2_x}\,ds\\
 &\leq \frac{1}{2\nu} \|\theta_{0,\eps}\|^2_{\hamx}\qquad \forall t\geq 0.
  \end{align}
a  Since $\|\theta_{0,\eps}\|_{\hamx}\leq \|\theta_{0}\|_{\hamx}$, this proves 
  \begin{equation}
      \label{getting higer order bound}
      4\nu^2 \int_0^\infty \tau \|\theta^\eps(\tau)\|^2_{\dot H^{\frac12}_x}\,d\tau\leq \|\theta_{0}\|_{\hamx}^2.
  \end{equation}
  Also 
  $$
  t \|\theta^\eps(t)\|^2_{L^2_x}\leq \int_0^t \|\theta^\eps(s)\|^2_{L^2_x}\,ds\leq \frac{1}{2\nu} \|\theta_0\|^2_{\hamx}\qquad \forall t>0.
  $$
  It follows that 
 \begin{equation}
     \label{second uniform bound}
     \{\theta^\eps\}_\eps\subset L^2_{\rm loc}((0,\infty);\dot H^{\frac12}(\T^2))\cap L^\infty_{\rm loc}((0,\infty);L^2(\T^2))\qquad \text{is bounded.}
 \end{equation}
In addition, by the structure of the nonlinearity (see Remark \ref{R:nonlinearity well defined}) we also get that
\begin{equation}
     \label{third uniform bound}
     \{\partial_t\theta^\eps\}_\eps\subset  L^\infty([0,\infty);\dot H^{-k}(\T^2))\qquad \text{is bounded}
 \end{equation}
for a sufficiently large $k\geq 1$. Thanks to \eqref{first uniform bound}, \eqref{second uniform bound} and \eqref{third uniform bound}, the Aubin--Lions lemma yields to a (non-relabeled) subsequence $ \{\theta^\eps\}_\eps$ and a scalar $\theta\in L^2([0,\infty)\times \T^2)$ such that 
\begin{equation}
    \label{strong in hamilt}
    \theta^\eps \rightarrow \theta \qquad \text{in } L^2_{\rm loc}([0,\infty);\dot H^{-\frac12}(\T^2))
\end{equation}
and
\begin{equation}
    \label{strong local time}
    \theta^\eps \rightarrow \theta \qquad \text{in } C^0_{\rm loc}((0,\infty);\dot H^{-\frac12}(\T^2))\cap L^2_{\rm loc}((0,\infty);L^2(\T^2)).
\end{equation}
Since 
$$
\int_0^\infty \int_{\T^2}\theta^\eps \left( \partial_t\varphi +\mathcal R^\perp \theta^\eps \cdot \nabla \varphi - \nu (-\Delta)^\frac12 \varphi\right)  dxdt= - \int_{\T^2}\theta_{0,\eps}(x)\varphi(x,0)\,dx
$$
for all $\varphi \in C^\infty_c([0,\infty)\times \T^2)$ and all $\eps>0$, by the strong convergence \eqref{strong in hamilt} we obtain\footnote{More precisely, the strong convergence \eqref{strong in hamilt} allows to pass to the limit in the nonlinearity when the latter is written as in \eqref{eq.sqg-weak-sol} (see Remark \ref{R:nonlinearity well defined}). Then, since $\theta\in L^2([0,\infty)\times \T^2)$, we obtain \eqref{weak sol viscous}.} \eqref{weak sol viscous}. Moreover, the estimate \eqref{eq.higher-bound} follows by \eqref{getting higer order bound} and the lower semicontinuity under weak convergence. We are left to show $\theta\in C^0([0,\infty);\dot H^{-\frac12}(\T^2))$ and the energy balance \eqref{eq.viscous-conservation_propos}.

By \eqref{strong local time} we can let $\eps\rightarrow 0$ in the energy balance for positive times to obtain 
\begin{equation}
    \label{admissibility}
    \|\theta(t)\|_{\hamx}\leq \|\theta_0\|_{\hamx}\qquad \forall t >0
\end{equation}
and 
\begin{equation}
    \label{balance for positive s}
    \|\theta(t)\|^2_{\dot{H}_x^{-\mez}} + 2\nu \int_s^t\|\theta(\tau)\|^2_{L^2_x} d\tau = \|\theta(s)\|^2_{\dot{H}_x^{-\mez}} \qquad \forall t\geq s> 0.
\end{equation}
Since $\theta\in C^0((0,\infty);\dot H^{-\frac12}(\T^2))\cap L^2([0,\infty)\times \T^2)$, a direct consequence of \eqref{weak sol viscous} is that $\theta(t)\overset{*}{\rightharpoonup} \theta_0$ in $\dot H^{-\frac12}(\T^2)$ as $t\rightarrow 0$. Together with \eqref{admissibility} this gives 
$$
\|\theta_0\|_{\hamx}\leq \liminf_{t\rightarrow 0}\|\theta(t)\|_{\hamx}\leq \limsup_{t\rightarrow 0}\|\theta(t)\|_{\hamx}\leq \|\theta_0\|_{\hamx}.
$$
In particular, the initial datum is achieved strongly in $\dot H^{-\frac12}(\T^2)$. This shows that  $\theta\in C^0([0,\infty);\dot H^{-\frac12}(\T^2))$. We can thus let $s\rightarrow 0$ in \eqref{balance for positive s} and conclude the validity of the energy balance \eqref{eq.viscous-conservation_propos}. 
\end{proof}

\section{Proof of the main theorems}

Theorem \ref{T:global-existence} is a direct consequence of Theorem \ref{T:main vanish visc}.

\begin{proof}[Proof of Theorem \ref{T:global-existence}]
    For any $\nu>0$ define $\eps_\nu:=\nu$ and let $\theta^\nu_0:=\theta_0*\rho_{\eps_\nu}$ be the mollification of $\theta_0\in L^\frac{4}{3} (\T^2)$. Let $\{\theta^\nu\}_\nu$ be the corresponding sequence of smooth solutions to \eqref{viscous-SQG}, which exists thanks to \cite{KNV07}. Since $\theta^\nu_0\rightarrow \theta_0$ in $L^\frac43(\T^2)$, we deduce $|\theta^\nu_0|^\frac43 \rightarrow |\theta_0|^\frac43$ in $L^1(\T^2)$. In particular $\{|\theta^\nu_0|^\frac43 \}_\nu\subset L^1 (\T^2)$ is weakly compact. We can thus apply Theorem \ref{T:main vanish visc} to obtain a scalar function $\theta$ satisfying all the requirements in the thesis of Theorem \ref{T:global-existence}.
\end{proof}

The proof of Theorem \ref{T:global-existence} we have presented here uses the fact that the spatial domain $\T^2$ has finite measure. This has been used in both Proposition \ref{P: equi int quantitative} and in the De la Vallée Poussin criterion invoked in the proof of Proposition \ref{P:concentration compact}.
\begin{remark}
    The weak solutions to \eqref{SQG} constructed in Theorem \ref{T:global-existence} could have also been obtained by any viscous approximation with subcritical dissipation  $(-\Delta)^\gamma$, $\gamma\in (\frac12,1]$. Indeed, the corresponding dissipative PDE is globally well-posed for smooth data. However, in this case, the conservation of the Hamiltonian, i.e. establishing the counterpart of Theorem \ref{T:main vanish visc}, becomes more involved. This happens because the dissipative term $(-\Delta)^\gamma$ breaks the natural scaling between the Hamiltonian and the higher-order bound \eqref{eq.higher-bound}. This issues has been recently addressed in \cite{DY26} which, in fact, generalizes the argument to the supercritical case $\gamma<\frac12$ as well. Note that global existence of smooth solutions is not known, and perhaps not expected \cite{K10}, in the supercritical regime. Therefore, for $\gamma<\frac12$, it becomes necessary to work with properly defined weak solutions already at the positive viscosity level.
\end{remark}

Before proving Theorem \ref{T:main vanish visc} we give the following general result ruling out anomalous dissipation. Since the sharp condition yielding to the conservation of the Hamiltonian for \eqref{SQG} is  $\theta \in L^3([0,T]\times \T^2)$, it must be noted that this is an Onsager supercritical condition.
\begin{theorem}
    \label{T:compact implies no AD}
    Let $\{\theta^\nu\}_{\nu}$ be a sequence of Leray solutions to \eqref{viscous-SQG} in the sense of Definition \ref{def.viscous-sqg-sol} with zero average initial data $\{\theta_0^{\nu}\}_\nu \subset \dot H^{-\frac12}(\T^2)$. Assume that $\{\theta_0^{\nu}\}_\nu$ is strongly compact in $\dot H^{-\frac12}(\T^2)$. Then 
    \begin{equation}\label{dissip small for small times}
       \forall \eps>0 \quad \exists \delta>0 \quad \text{s.t.} \quad\sup_{\nu\in (0,1)}\nu \int_0^\delta \|\theta^\nu(\tau)\|^2_{L^2_x}\, d\tau<\eps.
    \end{equation}
    Moreover, if in addition $ \{\theta^{\nu}\}_\nu\subset L^2([0,T];\dot H^{-\frac12}(\T^2))$ is strongly compact, then
    \begin{equation}
        \label{eq compact implies no AD}
    \lim_{\nu\rightarrow 0}\nu \int_0^T\|\theta^\nu(\tau)\|^2_{L^2_x}\, d\tau=0.
    \end{equation}
\end{theorem}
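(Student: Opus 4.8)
The plan is to read the dissipation directly off the energy balance \eqref{eq.viscous-conservation_propos}, which gives for every $t\geq0$
\[
2\nu\int_0^t\|\theta^\nu(\tau)\|^2_{L^2_x}\,d\tau = \|\theta_0^\nu\|^2_{\hamx}-\|\theta^\nu(t)\|^2_{\hamx},
\]
so that both claims become statements about the deficit of the Hamiltonian. I will use the sharp frequency truncations $f_{\leq N}:=\sum_{|n|\leq N}\hat f(n)e^{in\cdot x}$ and $f_{>N}:=f-f_{\leq N}$, and write $C_0:=\sup_\nu\|\theta_0^\nu\|^2_{\hamx}$, which is finite since $\{\theta_0^\nu\}_\nu$, being precompact in $\ham$, is bounded.

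To prove \eqref{dissip small for small times} I would split $\|\theta_0^\nu\|^2_{\hamx}-\|\theta^\nu(\delta)\|^2_{\hamx}$ according to frequencies $\leq N$ and $>N$. The high part is at most $\|(\theta_0^\nu)_{>N}\|^2_{\hamx}$ (dropping the nonnegative term coming from $\theta^\nu(\delta)$), which is equi-small in $\nu$ for $N$ large exactly because $\{\theta_0^\nu\}_\nu$ is strongly compact in $\ham$. For the low part I would test \eqref{viscous-SQG} with $(\theta^\nu)_{\leq N}$ in the $\ham$ inner product, obtaining
\[
\frac{d}{dt}\|(\theta^\nu)_{\leq N}\|^2_{\hamx}=-2\nu\|(\theta^\nu)_{\leq N}\|^2_{L^2_x}-2\big\langle \mathcal R^\perp\theta^\nu\cdot\nabla\theta^\nu,(\theta^\nu)_{\leq N}\big\rangle_{\ham}.
\]
The viscous term integrates to at most $2\nu N\delta\,C_0$, while the nonlinear term I would recast through the commutator identity \eqref{cant_smile_without_you} with the band-limited (hence smooth) choice $\varphi:=(-\Delta)^{-\mez}(\theta^\nu)_{\leq N}$, so that \eqref{top_of_the_world} bounds it by $C\|\varphi\|_{C^3}\|\theta^\nu\|^2_{\hamx}\leq C(N)C_0^{3/2}$ with $C(N)$ a power of $N$. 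Integrating over $[0,\delta]$ controls the low part by $C(N)\delta$. Given $\eps$, I fix $N$ to render the high part $<\eps$ and then $\delta$ small to render the low part $<\eps$, uniformly in $\nu\in(0,1)$.

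For \eqref{eq compact implies no AD} I would split both in frequency at level $M$ and in time as $[0,\delta]\cup[\delta,T]$, handling $[0,\delta]$ by \eqref{dissip small for small times}. On $[\delta,T]$ the low frequencies give $\nu\int_\delta^T\|(\theta^\nu)_{\leq M}\|^2_{L^2_x}\,d\tau\leq \nu M T C_0\to0$ as $\nu\to0$ for fixed $M$. The decisive high-frequency piece I would estimate with the interpolation $\|f\|^2_{L^2_x}\leq\|f\|_{\hamx}\|f\|_{\dot H^{\frac12}_x}$ and Cauchy--Schwarz in time,
\[
\nu\int_\delta^T\|(\theta^\nu)_{>M}\|^2_{L^2_x}\,d\tau\leq\nu\Big(\int_\delta^T\|(\theta^\nu)_{>M}\|^2_{\hamx}\,d\tau\Big)^{1/2}\Big(\int_\delta^T\|\theta^\nu\|^2_{\dot H^{\frac12}_x}\,d\tau\Big)^{1/2},
\]
and then invoke the smoothing bound \eqref{eq.higher-bound} in the form $\int_\delta^T\|\theta^\nu\|^2_{\dot H^{\frac12}_x}\,d\tau\leq\frac1\delta\int_0^\infty t\|\theta^\nu\|^2_{\dot H^{\frac12}_x}\,dt\leq\frac{C_0}{4\delta\nu^2}$; the two powers of $\nu$ cancel and leave $\frac{C_0^{1/2}}{2\sqrt\delta}\big(\int_\delta^T\|(\theta^\nu)_{>M}\|^2_{\hamx}\,d\tau\big)^{1/2}$. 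Since $\{\theta^\nu\}_\nu$ is strongly compact in $L^2([0,T];\ham)$, its high-frequency tails are equi-small, so for $M$ large this is $<\eps$ uniformly in $\nu$. Letting $\nu\to0$ and then $\eps\to0$ finishes the proof.

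The main obstacle, and the reason the two regimes demand different arguments, is the mismatch of weights: the dissipation is an $L^2_x$ quantity weighting frequencies by $|n|$, whereas the compactness only lives in $\ham$. On $[\delta,T]$ this gap is bridged by the viscous smoothing \eqref{eq.higher-bound}, whose $\nu^{-2}$ precisely absorbs the prefactor $\nu$ --- but its weight $t$ degenerates at the origin and is therefore useless for short times. Near $t=0$ there is no control of $\dot H^{\frac12}_x$ whatsoever, and the only leverage is the equi-smallness of the high-frequency tail of the \emph{initial} data; transporting this forward is exactly what the low-frequency identity above achieves, the delicate point being that the nonlinearity, once rewritten via \eqref{cant_smile_without_you}, is genuinely bounded by $\|\theta^\nu\|^2_{\hamx}$ (times a power of $N$) and not by any Onsager-supercritical norm.
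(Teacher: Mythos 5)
Your proof is correct, and while the first half shares the paper's mechanism, the second half takes a genuinely different route. For \eqref{dissip small for small times} both arguments rest on the same two ingredients --- the equi-small initial tails $\Phi(N):=\sup_\nu\|(\theta_0^\nu)_{>N}\|_{\hamx}$ coming from compactness, and the commutator structure \eqref{cant_smile_without_you}--\eqref{top_of_the_world} applied to a band-limited multiplier whose $C^3$ norm costs a power of $N$ --- but the bookkeeping differs: the paper bounds the Hamiltonian deficit by $C\|\theta^\nu(t)-\theta_0^\nu\|_{\hamx}$ and controls this distance via the weak time-equicontinuity \eqref{Stayin_Alive_Beegees} tested against the \emph{fixed} function $(-\Delta)^{-1}(\theta_0^\nu)_{\leq N}$, whereas you run a Galerkin-type projected energy identity for $\|(\theta^\nu)_{\leq N}\|^2_{\hamx}$ with the solution-dependent multiplier $(-\Delta)^{-\mez}(\theta^\nu(t))_{\leq N}$; this is legitimate since \eqref{cant_smile_without_you} and \eqref{top_of_the_world} are pointwise in time, with the caveat that for Leray solutions in the sense of \cref{def.viscous-sqg-sol} the projected identity must be justified mode by mode from \eqref{weak sol viscous} (each $\hat\theta^\nu(\cdot,n)$ is $W^{1,1}_{\rm loc}$ because $u^\nu\theta^\nu\in L^1$), the same footnote-level approximation the paper performs to get \eqref{Stayin_Alive_Beegees}. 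For \eqref{eq compact implies no AD} the paper extracts a strong limit $\theta$, splits $\theta^\nu=(\theta^\nu-\theta)+\theta$, uses Cauchy--Schwarz plus \eqref{eq.higher-bound} on the difference and a separate weak-convergence argument ($\nu|\nabla|^{\frac12}\theta^\nu\rightharpoonup 0$) on the remaining term, closing with a subsequence-of-subsequences step; you never extract a limit, but instead split in frequency at a fixed $M$, kill the low modes by the trivial bound $\nu MTC_0$, and handle the high modes by interpolation plus \eqref{eq.higher-bound}, compactness entering only through the equi-smallness of the tails $\sup_\nu\|(\theta^\nu)_{>M}\|_{L^2([0,T];\ham)}\to 0$ (valid for precompact sets by a finite $\eps$-net argument, since the projections $P_{>M}$ are uniformly bounded and vanish strongly). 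Your version is fully quantitative, avoids subsequence extraction and limit identification, and is in substance the mechanism of the paper's \cref{T: comp diss scale}: it shows that strong $L^2_t\dot H^{-\frac12}_x$ compactness implies the frequency-$c\nu^{-1}$ smallness condition there, so \eqref{eq compact implies no AD} becomes a corollary of that sharper statement; the paper's longer route through the limit $\theta$ is the natural one when the limit object itself is needed, as in \cref{T:main vanish visc}, but as a proof of \eqref{eq compact implies no AD} alone yours is leaner.
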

\begin{proof}
We prove the two claims separately.

\underline{\textsc{Proof of \eqref{dissip small for small times}}}. Let us denote 
\[
\Phi(N):= \sup_{\nu>0}\rVert \left(\theta_0^\nu\right)_{>N} \rVert_{\dot H^{-\frac{1}{2}}_x},
\]
where we are using the notation \eqref{cuts freq} for the frequency cutoff. Since $\left\{\theta_0^\nu\right\}_{\nu}$ is strongly compact in $\dot H^{-\frac{1}{2}}(\T^2)$, we have
\[
\lim_{N \to \infty}\Phi(N)=0.
\]
We compute
\begin{align}
\rVert \theta^\nu(t)-\theta^\nu_0\rVert_{\dot H^{-\frac{1}{2}}_x}^2&=\int_{\mathbb{T}^2}\left||\nabla|^{-\frac{1}{2}}\theta^\nu-|\nabla|^{-\frac{1}{2}}\theta^\nu_0\right|^2 dx\\
&=\rVert \theta^\nu(t)\rVert_{\dot H^{-\frac{1}2}_x}^2 - \rVert \theta_0^\nu\rVert_{\dot H^{-\frac{1}2}_x}^2 - 2\int_{\mathbb{T}^2} |\nabla|^{-\frac{1}{2}}\theta_0^\nu \left(|\nabla|^{-\frac{1}{2}}\theta^\nu-|\nabla|^{-\frac{1}{2}}\theta_0^\nu\right)dx\\
&\le - 2\int_{\mathbb{T}^2} |\nabla|^{-\frac{1}{2}}\theta_0^\nu \left(|\nabla|^{-\frac{1}{2}}\theta^\nu-|\nabla|^{-\frac{1}{2}}\theta_0^\nu\right)dx\\
&= - 2\int_{\mathbb{T}^2} |\nabla|^{-\frac{1}{2}}(\theta_0^\nu)_{>N} \left(|\nabla|^{-\frac{1}{2}}\theta^\nu-|\nabla|^{-\frac{1}{2}}\theta_0^\nu\right)dx\\
& \quad- 2\int_{\mathbb{T}^2} |\nabla|^{-\frac{1}{2}}(\theta_0^\nu)_{\leq N} \left(|\nabla|^{-\frac{1}{2}}\theta^\nu-|\nabla|^{-\frac{1}{2}}\theta_0^\nu\right)dx\\
&=: A -2 B,\label{Pianoman}
\end{align}
where the inequality follows from \eqref{eq.viscous-conservation_propos}. For $A$ we apply the Cauchy--Schwarz inequality to get
\begin{align}\label{Final_countdown}
|A|\le C\rVert (\theta^\nu_0)_{>N}\rVert_{\dot H^{-\frac{1}2}_x}\left(\rVert \theta^\nu(t)\rVert_{\dot H_x^{-\frac{1}{2}}}+\rVert \theta^\nu_0\rVert_{\dot H_x^{-\frac{1}{2}}}\right)\le C \Phi(N).
\end{align}
To estimate $B$, for any $ \varphi\in C^\infty(\mathbb{T}^2)$, we claim that
\begin{align}\label{Stayin_Alive_Beegees}
\sup_{\nu\in(0,1)}\left|\int_{\mathbb{T}^2} \varphi(x) (\theta^\nu(t,x)-\theta^\nu(t_0,x))\, dx\right|\le C\rVert \varphi\rVert_{C^3_x}|t-t_0|\qquad \forall 0\le t_0\le t,
\end{align}
for some constant $C>0$ that depends on $\sup_{\nu>0}\rVert \theta^\nu_0\rVert_{\dot H_x^{-\frac{1}{2}}}$ only.
Once the claim is proved, we plug in $t_0=0$ and $\varphi=(-\Delta)^{-1}(\theta_0^\nu)_{\leq N}$ and obtain
\[
\sup_{\nu\in (0,1)}|B|\le C \sup_{\nu>0}\rVert (-\Delta)^{-1}(\theta^\nu_0)_{\leq N}\rVert_{C^3_x}t \le CN^{10} t.
\]
Together with \eqref{Pianoman}   and \eqref{Final_countdown}, this proves 
\[
\rVert \theta^\nu(t)-\theta^\nu_0\rVert_{\dot H^{-\frac{1}{2}}_x}\le C\left( \Phi(N)+N^{10} t\right).
\]
 Therefore, for any $N>1$, the energy identity \eqref{eq.viscous-conservation_propos} can be estimated as
\begin{align}
\limsup_{t\rightarrow 0} \left( \sup_{\nu\in(0,1)}\nu\int_0^t \rVert \theta^\nu(\tau)\rVert_{L^2_x}^2 d\tau \right)&\le \limsup_{t\rightarrow 0} \sup_{\nu\in(0,1)}\left( \rVert \theta^\nu_0\rVert_{\dot{H}_x^{-\frac{1}{2}}}^2-\rVert \theta^\nu(t)\rVert_{\dot{H}_x^{-\frac{1}{2}}}^2\right)\\
&\le C\limsup_{t\rightarrow 0}\sup_{\nu\in(0,1)}\rVert \theta^\nu(t)- \theta^\nu_0\rVert_{\dot H^{-\frac{1}2}_x}\\
&\le C\Phi(N).
\end{align}
Since $N$ is arbitrary, we conclude  
\[
\limsup_{t\rightarrow 0} \left( \sup_{\nu\in(0,1)}\nu\int_0^t \rVert \theta^\nu(\tau)\rVert_{L^2_x}^2 d\tau \right)=0,
\]
which gives \eqref{dissip small for small times}.

To prove the claim \eqref{Stayin_Alive_Beegees} we use the weak formulation\footnote{Since $\theta^\nu\in C^0([0,\infty);\dot H^{-\frac12}(\T^2))$ we can let the test function in \eqref{weak sol viscous} converge to $\mathbbm{1}_{[t_0,t]}(\tau)\varphi(x)$.} \eqref{weak sol viscous} of Leray solutions
\begin{align}
 \int_{\mathbb{T}^2}\varphi(x)(\theta^\nu(t,x)-\theta^\nu(t_0,x))\,dx & = \int_{t_0}^t\int_{\mathbb{T}^2}  \theta^\nu\left(\mathcal{R}^\perp\theta^\nu\cdot \nabla \varphi -\nu(-\Delta)^{\frac{1}{2}}\varphi\right) dxd\tau \\
& =  \underbrace{\int_{t_0}^t\int_{\mathbb{T}^2}  \theta^\nu \mathcal{R}^\perp \theta^\nu\cdot \nabla \varphi  \,dxd\tau}_{=:B_1}\\
&\quad -\underbrace{\nu\int_{t_0}^t\int_{\mathbb{T}^2} |\nabla|^{-\frac{1}{2}}\theta^\nu(-\Delta)^{\frac{3}{4}}\varphi \,dxd\tau}_{=:B_2}.
\end{align}
Since $\rVert \theta^\nu(t)\rVert_{\dot H_x^{-\frac{1}{2}}}\leq \rVert \theta^\nu_0\rVert_{\dot H_x^{-\frac{1}{2}}}$, we estimate $B_2$ as
\begin{equation}\label{carpenters}
\sup_{\nu\in(0,1)}|B_2|\le C  |t-t_0|\rVert \varphi\rVert_{\dot H_x^{\frac{3}{2}}}\le  C  |t-t_0| \rVert \varphi\rVert_{C^3_x},
\end{equation}
where the constant $C>0$ depends only on $\sup_{\nu>0}\rVert \theta^\nu_0\rVert_{\dot H^{-\frac{1}2}_x}$.
For $B_1$, we use the identity \eqref{cant_smile_without_you} and the continuity estimate \eqref{top_of_the_world} to get
\begin{align}
|B_1|\le  C|t-t_0|\rVert \varphi\rVert_{C^3_x}\rVert \theta^\nu_0\rVert_{\dot H^{-\frac{1}2}_x}^2\le C|t-t_0|\rVert \varphi\rVert_{C^3_x}.
\end{align}
 We have proved \eqref{Stayin_Alive_Beegees}.

\underline{\textsc{Proof of \eqref{eq compact implies no AD}}}. Since  $\{\theta^{\nu}\}_\nu$ is assumed to be compact, we find a subsequence such that $\theta^\nu\rightarrow \theta$ in $L^2([0,T];\dot H^{-\frac12}(\T^2))$. Let $\eps>0$ and choose $\delta>0$ such that \eqref{dissip small for small times} holds. Then we split 
\begin{equation}
    \label{split diss small e large times}
   \nu \int_0^T\|\theta^\nu(\tau)\|^2_{L^2_x}\, d\tau<\eps + \nu \int_\delta^T\|\theta^\nu(\tau)\|^2_{L^2_x}\, d\tau.
\end{equation}
Moreover, by \eqref{eq.higher-bound} 
\begin{align}
    \nu \int_\delta^T\|\theta^\nu(\tau)\|^2_{L^2_x}\, d\tau& = \nu \int_\delta^T\int_{\T^2}|\nabla|^{-\frac12} \theta^\nu |\nabla|^{\frac12} \theta^\nu\, dxd\tau \\
    &=\nu \int_\delta^T\int_{\T^2}|\nabla|^{-\frac12} (\theta^\nu -\theta)|\nabla|^{\frac12} \theta^\nu\, dxd\tau \\
    &\quad + \nu \int_\delta^T\int_{\T^2}|\nabla|^{-\frac12} \theta |\nabla|^{\frac12} \theta^\nu\, dxd\tau\\
    &\leq \nu \left(\int_\delta^T \|(\theta^\nu-\theta)(\tau)\|^2_{\dot H^{-\frac12}_x}\,d\tau\right)^\frac12 \left(\int_\delta^T \|\theta^\nu(\tau)\|^2_{\dot H^{\frac12}_x}\,d\tau \right)^\frac12\\
    &\quad +\nu \int_\delta^T\int_{\T^2}|\nabla|^{-\frac12} \theta |\nabla|^{\frac12} \theta^\nu\, dxd\tau\\
    &\leq C_\delta \left(\int_\delta^T \|(\theta^\nu-\theta)(\tau)\|^2_{\dot H^{-\frac12}_x}\,d\tau\right)^\frac12 \\
    &\quad +\nu \int_\delta^T\int_{\T^2}|\nabla|^{-\frac12} \theta |\nabla|^{\frac12} \theta^\nu \, dxd\tau.
\end{align}
The first term vanishes since $\theta^\nu\rightarrow \theta$ in $L^2([0,T];\dot H^{-\frac12}(\T^2))$. We need to handle the second term. Note that $\{\nu |\nabla |^\frac12 \theta^\nu\}_\nu$ stays bounded in $L^2([\delta, T]\times \T^2)$ thanks to \eqref{eq.higher-bound}. Moreover, since $\{\theta^\nu\}_\nu$ is bounded in $L^\infty([0,\infty);\ham (\T^2))$, we also get  
$$
\nu \int_\delta^T\int_{\T^2}\varphi |\nabla|^{\frac12} \theta^\nu \, dxd\tau= \nu \int_\delta^T\int_{\T^2} |\nabla|^{-\frac12}\theta^\nu|\nabla| \varphi \, dxd\tau\rightarrow 0
$$
for all $\varphi\in C^\infty([0,T]\times \T^2)$. Thus $\nu |\nabla |^\frac12 \theta^\nu\rightharpoonup 0$ in $L^2([\delta, T]\times \T^2)$ and consequently 
$$
\lim_{\nu\rightarrow 0}\nu \int_\delta^T\int_{\T^2}|\nabla|^{-\frac12} \theta |\nabla|^{\frac12} \theta^\nu\, dxd\tau=0.
$$
This shows 
$$
\limsup_{\nu\rightarrow 0} \nu \int_\delta^T\|\theta^\nu(\tau)\|^2_{L^2_x}\, d\tau=0.
$$
Letting $\nu\rightarrow 0$ in \eqref{split diss small e large times} yields to 
$$
\limsup_{\nu\rightarrow 0} \nu \int_0^T\|\theta^\nu(\tau)\|^2_{L^2_x}\, d\tau<\eps.
$$
Since $\eps>0$ was arbitrary, we deduce that, along the subsequence we picked at the beginning, there is no anomalous dissipation. By repeating the above argument starting from any arbitrary subsequence, we obtain a further subsequence along which there is no dissipation. This proves \eqref{eq compact implies no AD}.
\end{proof}

We can now prove Theorem \ref{T:main vanish visc}. 

\begin{proof}[Proof of Theorem \ref{T:main vanish visc}]
        Note that, since the initial data are smooth, the solutions will be smooth for all times and all $\nu>0$ (see \cite{KNV07}). As always, subsequences will not be relabeled. By the $L^\frac43(\T^3)$ boundedness of the initial data
        , we find a $\theta_0\in L^\frac43(\T^3)$ such that $\theta^\nu_0\rightharpoonup \theta_0$ in $L^\frac43(\T^2)$ and $\theta^\nu_0\overset{*}{\rightharpoonup} \theta_0$ in $\dot H^{-\frac12}(\T^2)$. Since $\{|\theta^\nu_0|^\frac43\}_\nu\subset L^1(\T^2)$ is weakly compact, by Proposition \ref{P:concentration compact} we deduce 
    \begin{equation}\label{strong in hamilt initial data}
        \theta^\nu_0\rightarrow \theta_0 \qquad \text{in } \dot H^{-\frac12}(\T^2).
    \end{equation}
    Let $\beta\in C^1(\R)$ be convex. Multiplying the first equation in \eqref{viscous-SQG} by $\beta'(\theta^\nu)$ we get 
    $$
    \partial_t \beta(\theta^\nu)+ u^\nu\cdot \nabla \beta(\theta^\nu)= -\nu\beta'(\theta^\nu)(-\Delta)^\frac12\theta^\nu.
    $$
    Thanks to Lemma \ref{lem.positivity} this gives 
    \begin{equation}
        \label{renormalizing sqg visc}
        \int_{\T^2}  \beta(\theta^\nu (t,x))\,dx\leq  \int_{\T^2}  \beta(\theta_0^\nu (x))\,dx\qquad \forall t \geq 0.
    \end{equation}
    Choosing $\beta(r)=|r|^\frac43$, by the previous inequality we deduce that $\{\theta^\nu\}_\nu$ stays bounded in $L^\infty([0,\infty);L^\frac43(\T^2))$. Note that $\theta^\nu$  satisfies the energy balance \eqref{eq.viscous-conservation_propos} by smoothness. Thus
    \begin{equation}
        \label{theta nu bounded in time}
        \|\theta^\nu(t)\|_{\hamx}\leq  \|\theta^\nu_0\|_{\hamx}\qquad \forall t\geq 0.
    \end{equation}
    We obtain a subsequence $\theta^\nu\overset{*}{\rightharpoonup}\theta$  in $L^\infty([0,\infty);L^\frac43(\T^2))$ and in $L^\infty([0,\infty);\dot H^{-\frac12}(\T^2))$. In particular, by also using \eqref{strong in hamilt initial data}, we deduce
    \begin{equation}
        \label{lower semi ham}
         \|\theta(t)\|_{\hamx}\leq  \|\theta_0\|_{\hamx} \qquad \text{for a.e. } t\geq 0
    \end{equation}
    and 
      \begin{equation}
        \label{lower semi L43}
         \|\theta(t)\|_{L^\frac43_x}\leq  \liminf_{\nu\rightarrow 0}\|\theta^\nu_0\|_{L^\frac43_x} \qquad \text{for a.e. } t\geq 0
    \end{equation}
    by lower semicontinuity. We break the rest of the proof down into steps.

    \underline{\textsc{Step 1}}: $\theta \in C^0_{\rm w}([0,\infty);L^\frac43(\T^2))$.

    We will show that $\theta$, the weak* limit of $\theta^\nu$, can be redefined on a negligible set of times so that $\theta \in C^0_{\rm w}([0,\infty);L^\frac43(\T^2))$. 
     Since smooth functions are dense in $L^{4}(\T^2)$, it suffices to show that the sequence of functions 
     $$
     t\mapsto\int_{\T^2} \theta^\nu(t,x)\varphi(x)\,dx
     $$
     is equi-continuous for all $ \varphi \in C^\infty(\mathbb{T}^2)$, which was already proved in \eqref{Stayin_Alive_Beegees}. Indeed, by the  Ascoli--Arzelà theorem\footnote{The domain $[0,\infty)$ is locally compact and separable while the target space $L^\frac{4}{3}(\T^2)$ is compact when endowed with the weak topology.}, this yields the existence of an element $\tilde \theta \in C^0_{\rm w}([0,\infty);L^\frac43(\T^2))$ such that, possibly up to a further subsequence,
    \begin{equation}
        \label{weak conv Lp for all times}
    \theta^\nu(t)\rightharpoonup\tilde \theta(t)\qquad \text{in }  L^{\frac43}(\T^2),\, \forall t\geq 0.
    \end{equation}
    Then, $\tilde \theta(t)= \theta(t)$ for almost every $t\geq 0$ necessarily. Moreover, the two limits \eqref{strong in hamilt initial data} and \eqref{weak conv Lp for all times} together imply $\tilde \theta(0)=\theta_0$. From now on, we will work with the continuous in time, in the weak topology of $L^\frac43(\T^2)$, representative. By slightly abusing notation we will still denote it by $\theta$. 

    \underline{\textsc{Step 2}}: $\theta^\nu(t)\rightarrow \theta(t)$ in $\dot H^{-\frac12}(\T^2)$ for all $t\geq 0$.

    Since $H^\frac12(\T^2)\subset L^4(\T^2)$, by \eqref{weak conv Lp for all times} we deduce 
    \begin{equation}
        \label{weak conv for all times}
        \theta^\nu(t)\overset{*}{\rightharpoonup}\theta(t)\qquad \text{in } \dot H^{-\frac12}(\T^2),\, \forall t\geq 0.
    \end{equation}

We now wish to apply Proposition \ref{P:concentration compact}. Since $\{|\theta^\nu_0|^\frac43\}_\nu\subset L^1(\T^2)$ is weakly compact, by the De la Vall\'ee Poussin criterion (see for instance \cite{K08}*{Theorem 6.19}) we find a convex function\footnote{The criterion is usually stated without the requirement $\beta\in C^1(\R_+)$. However, this can easily be enforced via regularization.} $\beta\in C^1(\R_+)$ such that $\lim_{r\rightarrow \infty}\frac{\beta(r)}{r}=\infty$ and 
$$
\sup_{\nu>0} \int_{\T^2} \beta(|\theta^\nu_0(x)|^\frac43)\,dx<\infty.
$$
The function $\tilde\beta(r):=\beta(|r|^\frac43)$ is convex and continuously differentiable. We can thus apply \eqref{renormalizing sqg visc} with $\tilde\beta$ and deduce 
\begin{equation}\label{uniform in time equi integr}
\sup_{t,\nu>0} \int_{\T^2} \beta(|\theta^\nu(t,x)|^\frac43)\,dx<\infty.
\end{equation}
Then, the De la Vall\'ee Poussin criterion implies that $\{|\theta^\nu (t)|^\frac43\}_\nu\subset L^1(\T^2)$ is weakly compact for all $t\geq 0$. Therefore, by Proposition \ref{P:concentration compact} we conclude $\theta^\nu(t)\rightarrow \theta(t)$ in $\dot H^{-\frac12}(\T^2)$ for all $t\geq 0$.  This proves $(a)$. Note that, the point-wise in time strong convergence in $\dot H^{-\frac12}(\T^2)$, together with \eqref{theta nu bounded in time} and \eqref{lower semi ham}, yields to 
     \begin{equation}
         \label{theta is compact space time}
         \theta^\nu\rightarrow \theta \qquad \text{in } L^2_{\rm loc}([0,\infty);\dot H^{-\frac12}(\T^2))
     \end{equation}
     by the Lebesgue dominated convergence theorem.

     \underline{\textsc{Step 3}}: $\theta$ solves \eqref{SQG}.

     Clearly $\theta^\nu$ satisfies \eqref{weak sol viscous}. We can thus write the nonlinearity as in \eqref{eq.sqg-weak-sol} and, in view of Remark \ref{R:nonlinearity well defined} and \eqref{theta is compact space time}, deduce that $\theta$ is a weak solution to \eqref{SQG} in the sense of Definition \ref{def.sqg-solutions} with initial datum $\theta_0$. This proves $(b)$.

      \underline{\textsc{Step 4}}: Hamiltonian conservation.

      By \eqref{theta is compact space time} and Theorem \ref{T:compact implies no AD} we deduce the validity of $(c)$. In particular, by letting $\nu\rightarrow 0$ in \eqref{eq.viscous-conservation_propos} we deduce $\|\theta(t)\|_{\hamx}=\|\theta_0\|_{\hamx}$ for all $t\geq 0$. Moreover, the time continuity $\theta \in C^0_{\rm w}([0,\infty);L^\frac43(\T^2))$ proved in step 1 upgrades \eqref{lower semi L43} to hold for all $t\geq 0$ by the lower semicontinuity of the norm under weak convergence. Note that 
      $$
      \theta \in C^0_{\rm w}([0,\infty);L^\frac43(\T^2))\subset C^0_{\rm w}([0,\infty);\dot H^{-\frac12}(\T^2))
      $$
      which, together with the conservation of the Hamiltonian for all times,  yields to the strong continuity $\theta \in C^0([0,\infty);\dot H^{-\frac12}(\T^2)).$  The proof of $(d)$, and thus of the theorem, is concluded.
\end{proof}

\begin{remark}\label{R:uniform in time conv}
By the Ascoli--Arzelà theorem the weak convergence in \eqref{weak conv Lp for all times} is uniform in time on every compact subset of $[0,\infty)$. In particular, by Proposition \ref{P: equi int quantitative} applied to the time dependent sequence $\{\theta^\nu\}_\nu$, it holds $\theta^\nu\rightarrow \theta$ in $C^0_{\rm loc}([0,\infty);\dot H^{-\frac12}(\T^2))$.
\end{remark}

We conclude this section by giving an improved version of Theorem \ref{T:compact implies no AD}, identifying the relevant scales at which the strong compactness becomes effective on the dissipation.  For scaling reasons due to the dissipation $\nu(-\Delta)^\frac12$, this corresponds to length scales $\ell_\nu\sim \nu$. In terms of  frequencies this becomes $N_\nu\sim \nu^{-1}$. Compactness at these frequencies becomes equivalent to no anomalous dissipation of the Hamiltonian. The terminology ``compactness at a given frequency'' is motivated by the fact that a bounded sequence $\{f_j\}_j\subset \ham(\T^2)$ is strongly compact in $\ham(\T^2)$ if and only if 
$$
\lim_{N\rightarrow \infty} \sup_{j\geq 1}\sum_{|n|>N}|n|^{-1}|\hat f_j(n)|^2=0.
$$

\begin{theorem}
    \label{T: comp diss scale}
     Let $\{\theta^\nu\}_{\nu}$ be a sequence of Leray solutions to \eqref{viscous-SQG} in the sense of Definition \ref{def.viscous-sqg-sol} with zero average initial data, $\{\theta_0^{\nu}\}_\nu \subset \dot H^{-\frac12}(\T^2)$ strongly compact. Then 
     \begin{equation}\label{equivalence}
    \lim_{\nu\rightarrow 0}\sum_{|n|>\frac{c}{\nu}} |n|^{-1} \int_0^T \left| \widehat{\theta^\nu(\tau)}(n)\right|^2\,d\tau =0\quad \forall c>0\quad \Longleftrightarrow\quad \lim_{\nu\rightarrow 0} \nu \int_0^T\|\theta^\nu(\tau)\|^2_{L^2_x}\, d\tau=0.
       \end{equation}
\end{theorem}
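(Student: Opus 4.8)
The plan is to prove the two implications in \eqref{equivalence} separately, the right-to-left one being elementary and the left-to-right one being the substantial part. Throughout I set $M:=\sup_\nu\|\theta_0^\nu\|_{\hamx}$, which is finite since $\{\theta_0^\nu\}_\nu$ is strongly (hence norm-bounded) compact in $\ham(\T^2)$, and I use Parseval to write $\|\theta^\nu(\tau)\|^2_{L^2_x}=\sum_n|\widehat{\theta^\nu(\tau)}(n)|^2$. The single elementary observation driving both directions is that on $|n|>c/\nu$ one has $c|n|^{-1}<\nu$, while on $|n|\le K/\nu$ one has $\nu\le K|n|^{-1}$; that is, the weight $\nu$ (appearing in the dissipation) and the weight $|n|^{-1}$ (appearing in the Hamiltonian) are comparable precisely at the dissipation scale $|n|\sim\nu^{-1}$, with constant dictated by the cutoff.

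For the direction ``$\Longleftarrow$'', assuming $\nu\int_0^T\|\theta^\nu(\tau)\|^2_{L^2_x}\,d\tau\to0$, I would simply bound, for each fixed $c>0$,
\[
c\sum_{|n|>c/\nu}|n|^{-1}\int_0^T\left|\widehat{\theta^\nu(\tau)}(n)\right|^2\,d\tau\le\nu\int_0^T\sum_{|n|>c/\nu}\left|\widehat{\theta^\nu(\tau)}(n)\right|^2\,d\tau\le\nu\int_0^T\|\theta^\nu(\tau)\|^2_{L^2_x}\,d\tau,
\]
using $c|n|^{-1}<\nu$ on the retained frequencies, and let $\nu\to0$.

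The forward direction ``$\Longrightarrow$'' is the heart of the matter, and the plan is a three-scale frequency decomposition combined with the small-time control already available from \cref{T:compact implies no AD}. Fix $\eps>0$. First split $\int_0^T=\int_0^\delta+\int_\delta^T$ and invoke \eqref{dissip small for small times} to choose $\delta>0$ with $\sup_{\nu\in(0,1)}\nu\int_0^\delta\|\theta^\nu\|^2_{L^2_x}\,d\tau<\eps/4$; this is exactly the step compensating for the degeneration of the weight $t$ near the origin in the higher-order bound \eqref{eq.higher-bound}. On $[\delta,T]$ I would split the frequencies at the two thresholds $c/\nu$ and $K/\nu$ and estimate the three pieces as follows. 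On the low band $|n|\le c/\nu$, the inequality $\nu\le c|n|^{-1}$ together with the energy bound \eqref{eq.viscous-conservation_propos} gives $\nu\int_\delta^T\sum_{|n|\le c/\nu}|\widehat{\theta^\nu}|^2\,d\tau\le c\int_\delta^T\|\theta^\nu\|^2_{\hamx}\,d\tau\le cTM^2$, small once $c$ is chosen small. On the very high band $|n|>K/\nu$, the pointwise bound $|\widehat{\theta^\nu}|^2\le(\nu|n|/K)|\widehat{\theta^\nu}|^2$ converts the dissipation into the $\dot H^{\frac12}_x$ norm, and \eqref{eq.higher-bound} combined with $t\ge\delta$ yields $\nu\int_\delta^T\sum_{|n|>K/\nu}|\widehat{\theta^\nu}|^2\,d\tau\le\frac{\nu^2}{K}\int_\delta^T\|\theta^\nu\|^2_{\dot H^{\frac12}_x}\,d\tau\le\frac{M^2}{4K\delta}$, small once $K$ is large. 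Finally, on the intermediate band $c/\nu<|n|\le K/\nu$, the inequality $\nu\le K|n|^{-1}$ gives $\nu\int_\delta^T\sum_{c/\nu<|n|\le K/\nu}|\widehat{\theta^\nu}|^2\,d\tau\le K\sum_{|n|>c/\nu}|n|^{-1}\int_0^T|\widehat{\theta^\nu}|^2\,d\tau$, which, with $c$ and $K$ already fixed, tends to $0$ as $\nu\to0$ precisely by the standing hypothesis. Choosing the parameters in the order $c$ (low band), then $\delta$ (small times), then $K$ (very high band), and finally $\nu$ small (intermediate band), all four contributions are $<\eps/4$, whence $\nu\int_0^T\|\theta^\nu\|^2_{L^2_x}\,d\tau<\eps$ for $\nu$ small.

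The main obstacle is the intermediate band $c/\nu<|n|\lesssim\nu^{-1}$: this is the genuine dissipation range where neither the low-frequency comparison nor the a priori smoothing bound \eqref{eq.higher-bound} is of any use, and it is exactly there that the hypothesis must be invoked. The delicate point is therefore the ordering of parameters, since controlling the low band forces $c$ to be small, which \emph{enlarges} the intermediate band; one must first freeze $c$ (and then $K$) and only afterwards send $\nu\to0$. It is precisely the quantification ``$\forall\,c>0$'' on the left-hand side of \eqref{equivalence} that makes this ordering legitimate.
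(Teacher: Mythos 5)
Your proof is correct, and its skeleton matches the paper's: the reverse implication is the same one-line weight comparison, and the forward one opens exactly as in the paper by splitting time at a small $\delta$ via \eqref{dissip small for small times} and splitting frequencies at the dissipative scale $c/\nu$, with the low band controlled through the energy bound \eqref{eq.viscous-conservation_propos}. Where you genuinely diverge is the treatment of the frequencies above $c/\nu$. The paper handles them in a single block: it interpolates $\|\theta^\nu_{>N_\nu}(\tau)\|_{L^2_x}^2\le\|\theta^\nu_{>N_\nu}(\tau)\|_{\hamx}\|\theta^\nu(\tau)\|_{\dot H^{\frac12}_x}$ and applies Cauchy--Schwarz in time, so that the hypothesis enters to the power $\frac12$, multiplied against the a priori bound \eqref{eq.higher-bound}, producing the estimate $C_\delta\bigl(c+(\cdot)^{\frac12}\bigr)$. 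You instead introduce a second threshold $K/\nu$: on the intermediate band $c/\nu<|n|\le K/\nu$ the hypothesis is used linearly through the pointwise comparison $\nu\le K|n|^{-1}$, while on the very high band $|n|>K/\nu$ the bound \eqref{eq.higher-bound} alone (weighted by $t\ge\delta$) gives a gain of order $1/(K\delta)$. Your route is more elementary --- no interpolation inequality, only Fourier weight comparisons --- at the price of one extra parameter and the ordering $c$, then $\delta$, then $K$, then $\nu\to0$, which you correctly flag as the delicate point; it also makes conceptually explicit that the hypothesis is only truly needed on the band of frequencies comparable to $\nu^{-1}$, the far tail being controlled by the parabolic smoothing bound alone, which is precisely the heuristic the paper states before the theorem. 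Both arguments consume exactly the same three ingredients, namely \eqref{dissip small for small times}, \eqref{eq.viscous-conservation_propos} and \eqref{eq.higher-bound}, so the difference is one of bookkeeping rather than of substance.
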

\begin{proof}
   For any $\eps>0$ choose $\delta>0$ such that \eqref{dissip small for small times} holds and bound 
   \begin{equation}
       \label{usual split dissipation}
       \nu \int_0^T\|\theta^\nu(\tau)\|^2_{L^2_x}\, d\tau<\eps + \nu \int_\delta^T\|\theta^\nu(\tau)\|^2_{L^2_x}\, d\tau.
   \end{equation}
   Let $c>0$ and set $N_\nu:=c\nu^{-1}$. Denote by $\theta^\nu_{\leq N_\nu}$ and $\theta^\nu_{> N_\nu}$ the frequencies cuts as in \eqref{cuts freq}. We split
   \begin{equation}
       \label{split diss in cuts}
       \nu \int_\delta^T\|\theta^\nu(\tau)\|^2_{L^2_x}\, d\tau=\nu \int_\delta^T\|\theta_{\leq N_\nu}^\nu(\tau)\|^2_{L^2_x}\, d\tau + \nu \int_\delta^T\|\theta_{> N_\nu}^\nu(\tau)\|^2_{L^2_x}\, d\tau.
   \end{equation}
   Since 
   $$
   \|\theta_{> N_\nu}^\nu(\tau)\|^2_{L^2_x}\leq \|\theta_{> N_\nu}^\nu(\tau)\|_{\hamx} \|\theta_{> N_\nu}^\nu(\tau)\|_{\dot H^{\frac12}_x}\leq \|\theta_{> N_\nu}^\nu(\tau)\|_{\hamx} \|\theta^\nu(\tau)\|_{\dot H^{\frac12}_x},
   $$
   by \eqref{eq.higher-bound} we estimate 
   \begin{align}
       \nu \int_\delta^T\|\theta_{> N_\nu}^\nu(\tau)\|^2_{L^2_x}\, d\tau&\leq \nu \left(\int_\delta^T\|\theta_{> N_\nu}^\nu(\tau)\|^2_{\ham_x}\, d\tau\right)^\frac12 \left(\int_\delta^T\|\theta^\nu(\tau)\|^2_{\dot H^{\frac12}_x}\, d\tau\right)^\frac12 \\
       &\leq C_\delta \left( \sum_{|n|>N_\nu} |n|^{-1} \int_0^T \left| \widehat{\theta^\nu(\tau)}(n)\right|^2\,d\tau\right)^\frac12.\label{bound high freq}
   \end{align}
   Also, by \eqref{eq.viscous-conservation_propos}  
   \begin{equation}
       \label{bound low freq}
       \|\theta_{\leq N_\nu}^\nu(\tau)\|^2_{L^2_x} \leq N_\nu \|\theta^\nu(\tau)\|^2_{\hamx}\leq CN_\nu.
   \end{equation}
   Recalling that $N_\nu=c\nu^{-1}$, by plugging \eqref{bound high freq} and \eqref{bound low freq} into \eqref{split diss in cuts} we obtain 
   $$
    \nu \int_\delta^T\|\theta^\nu(\tau)\|^2_{L^2_x}\, d\tau\leq C_\delta \left(c + \left( \sum_{|n|>N_\nu} |n|^{-1} \int_0^T \left| \widehat{\theta^\nu(\tau)}(n)\right|^2\,d\tau\right)^\frac12\right).
   $$
   Letting $\nu\rightarrow 0$ yields to 
   $$
   \limsup_{\nu\rightarrow 0}\nu \int_\delta^T\|\theta^\nu(\tau)\|^2_{L^2_x}\, d\tau\leq C_\delta c.
   $$
   Going back to \eqref{usual split dissipation}, we have proved 
   $$
    \limsup_{\nu\rightarrow 0}\nu \int_0^T\|\theta^\nu(\tau)\|^2_{L^2_x}\, d\tau\leq \eps + C_\delta c,
   $$
   which then must vanish by first letting $c\rightarrow 0$ and then $\eps\rightarrow 0$. This proves the left-to-right implication. The reverse implication is a direct consequence of 
   $$
    \sum_{|n|>N} |n|^{-1} \int_0^T \left| \widehat{\theta^\nu(\tau)}(n)\right|^2\,d\tau\leq \frac{1}{N}  \int_0^T\|\theta^\nu(\tau)\|^2_{L^2_x}\, d\tau \qquad \forall N,\nu>0.
   $$
\end{proof}

\section{Comments and remarks}
In this final section we make some additional remarks about the results we have proved.

\subsection{Sharp explicit  rates for $p>\frac43$} When the initial data $\{\theta^\nu_0\}_\nu$ belong to a bounded subset of $L^p(\T^2)$ for some $p>\frac43$, the dissipation can be proved to vanish with an explicit algebraic rate. Furthermore, we provide an example showing the sharpness of such rate. 

To obtain an explicit rate we follow the strategy \quotes{Sobolev interpolation \& superquadratic Gr\"onwall} introduced in \cite{CLLS16}, where the first supercritical energy conservation result has been proved in an inviscid limit context.  We start by 
\begin{equation}\label{preparing gronwal}
\frac{d}{dt}\|\theta^\nu(t)\|^2_{L^2_x} = -2\nu \| \theta^\nu(t)\|^2_{\dot H^\frac12_x}.
\end{equation}
Note that when $p=2$ the bound 
\begin{equation}\label{trivial rate p 2}
\nu\int_0^T\|\theta^\nu(t)\|^2_{L^2_x}\,dt\leq \nu T\|\theta^\nu_0\|^2_{L^2_x}
\end{equation}
is trivial, providing a rate linear in $\nu$. In general, such bound cannot be improved\footnote{The function $\theta^{\nu}(x_1,x_2,t)=e^{-\nu t}\sin x_1$ solves \eqref{viscous-SQG} with initial datum $\theta_0(x_1,x_2)=\sin x_1$. Its dissipation is exactly of order $\sim \nu$.} for $p>2$.  Therefore we can  assume $\frac43\leq p<2$.
By interpolation we have
$$
\|\theta^\nu(t)\|_{L^2_x}\leq C\|\theta^\nu(t)\|_{L^p_x}^\alpha \| \theta^\nu(t)\|_{\dot H^\frac12_x}^{1-\alpha}\qquad \text{with } \alpha= \frac{p}{4-p}.
$$
Moreover, assuming $\{\theta^\nu_0\}_\nu\subset L^p(\T^2)$ to be bounded, we get 
$$
\sup_{\nu,t>0}\|\theta^\nu(t)\|_{L^p_x}\leq \sup_{\nu>0}\|\theta^\nu_0\|_{L^p_x}<\infty.
$$
In particular, the interpolation inequality above becomes 
$$
\|\theta^\nu(t)\|^{\frac{1}{1-\alpha}}_{L^2_x}\leq C\| \theta^\nu(t)\|_{\dot H^\frac12_x}\qquad \text{with } \alpha= \frac{p}{4-p}.
$$
By plugging it into \eqref{preparing gronwal} we achieve 
$$
\frac{d}{dt}\|\theta^\nu(t)\|^2_{L^2_x}\leq -C \nu\|\theta^\nu(t)\|^{\frac{2}{1-\alpha}}_{L^2_x}.
$$
Then, the Gr\"onwall inequality yields to 
$$
\| \theta^\nu (t)\|^2_{L^2_x}\leq \frac{C}{\left(\frac{\alpha}{1-\alpha}\right)^\frac{1-\alpha}{\alpha}}\frac{1}{(\nu t)^\frac{1-\alpha}{\alpha}},
$$
for a constant $C>0$ independent on $\alpha,\nu$ and $t$. If $\alpha>\frac12$, which corresponds to $p>\frac43$, the above bound is integrable  around $t=0$. Therefore,  we obtain
\begin{align}
\nu\int_0^T \| \theta^\nu (t)\|^2_{L^2_x}\,dt&\leq \frac{C}{\left(\frac{\alpha}{1-\alpha}\right)^\frac{1-\alpha}{\alpha}}\frac{\alpha}{2\alpha -1}\nu^{\frac{2\alpha -1}{\alpha}}\\
&=\frac{C}{\left(\frac{p}{4-2p}\right)^\frac{4-2p}{p}}\frac{p}{3p-4}\nu^{\frac{3p-4}{p}}\qquad \text{for } p\in\left(\frac43,2\right),\label{final rate}
\end{align}
that is the desired rate. The above constant $C>0$ does not depend on $p$ and $\nu$. Therefore, when $p\rightarrow 2$, or equivalently when $\alpha\rightarrow 1$, the above bound converges to $\nu$, matching the trivial rate \eqref{trivial rate p 2} discussed above. It is clear that when $p=\frac43$ nothing can be deduced by the above argument. It is then necessary to follow a different strategy, for instance the one used to prove Theorem \ref{T:main vanish visc}.

The rate \eqref{final rate} can be proved to be sharp. For convenience we will work on the whole space $\R^2$. Fix a non-zero radial $\theta_0\in C^\infty_c(\R^2)$ with zero average and solve 
$$
 \begin{array}{rcll}
  \partial_t\theta +(-\Delta)^\frac12 \theta  &=& 0 & \\ 
  \theta(0,\cdot)&=&\theta_0&
  \end{array}
$$
on $\R^2\times [0,\infty)$. For any $\nu>0$ and $p\geq 1$ set
$$
\theta^\nu_0(x):=\frac{1}{\nu^\frac{2}{p}}\theta_0\left(\frac{x}{\nu}\right).
$$
It follows that the function
$$
\theta^\nu(t,x):=\frac{1}{\nu^\frac{2}{p}}\theta\left(t,\frac{x}{\nu}\right)
$$
solves
$$
 \begin{array}{rcll}
  \partial_t\theta^\nu +\nu(-\Delta)^\frac12 \theta^\nu &=& 0 & \\ 
  \theta^\nu(0,\cdot)&=&\theta^\nu_0.&
  \end{array}
$$
By the radial symmetry, we have obtained a sequence of solutions $\{\theta^\nu\}_\nu$ to \eqref{viscous-SQG} with initial data bounded in $L^p(\R^2)$. Moreover, a direct computation shows 
$$
\nu \int_0^T\|\theta^\nu(t)\|^2_{L^2_x}\,dt= \nu^\frac{3p-4}{p}\int_0^T \|\theta(t)\|^2_{L^2_x}\,dt,
$$
proving the sharpness of the rate obtained in \eqref{final rate} for $\frac43<p\leq 2$.

\subsection{Necessity of the initial equi-integrability} 
Without the equi-integrability assumption on the initial data, Theorem \ref{T:main vanish visc} fails. Indeed, on the whole space, the example constructed above for $p=\frac{4}{3}$ provides a sequence $\{\theta^\nu\}_\nu$ solving \eqref{viscous-SQG} such that 
$$
\nu \int_0^T\|\theta^\nu(t)\|^2_{L^2_x}\,dt=\int_0^T \|\theta(t)\|^2_{L^2_x}\,dt>0.
$$
Note that the initial data $\{|\theta^\nu_0|^\frac43\}_\nu$ concentrate at the origin. Moreover, in view of Theorem \ref{T:compact implies no AD}, the sequence of solutions must loose the space-time compactness in the Hamiltonian norm. Indeed, for scaling reasons, the spatial atomic concentration appear in the sequence $\{ ||\nabla|^{-\frac12}\theta^\nu(t)|^2\}_\nu$ as well, uniformly in $t\in[0,T]$. Of course, here the pathological behavior of the solutions is propagated by the one in the initial data. Whether, given \quotes{good} initial data, a similar mechanism can be induced by the wild dynamics in time is widely open.

\subsection{Uniform in time concentration compactness} The key step to prove Theorem \ref{T:global-existence} is to establish the strong compactness of the vanishing viscosity sequence $\{\theta^\nu\}_\nu$ in $L^2_{\rm loc}([0,\infty);\dot H^{-\frac12}(\T^2))$. This guarantees indeed that the limit is a weak solution to \eqref{SQG} and, thanks to Theorem \ref{T:compact implies no AD}, that there is no anomalous dissipation, from which the conservation of the Hamiltonian in the inviscid limit is deduced. The strong compactness of the vanishing viscosity limit has been deduced by Proposition \ref{P:concentration compact} since $\{|\theta^\nu(t)|^\frac43\}_\nu$ is weakly compact, or analogously equi-integrable in space, uniformly in time. Such property is propagated by that on the initial data (see \eqref{uniform in time equi integr}).

Since the embedding $L^\frac{4}{3}(\T^2)\subset \dot H^{-\frac12}(\T^2)$ is continuous, the absence of atomic concentrations in the higher-order norm suffices to obtain strong compactness in the lower order one. This is indeed a consequence of the concentration compactness principle by Lions \cites{PLL1,PLL2}. However, as opposite to the equi-integrability of the initial data when raised to the right power\footnote{This guarantees that any weak* limit of the sequence $\{|\theta^\nu_0|^\frac43\}_\nu$ is absolutely continuous with respect to the Lebesgue measure.}, there is no currently  known mechanism which allows to propagate in time the non-atomic condition. This issue is related to the global existence of finite energy weak solutions to the two-dimensional incompressible Euler equations with measure initial vorticity \cites{delort1991existence,scho95,DM88,evans1994hardy,vecchi19931}. In that context, the largest class of initial vorticities for which the global existence is known is that of Delort \cite{delort1991existence}, i.e. measures with positive singular part. Indeed, having a sign restriction on the singular part allows to propagate the non-atomic condition in time. However, the lack of the  embedding $L^1(\T^2)\subset H^{-1}(\T^2)$ makes the implementation of the concentration compactness argument highly non-trivial: only special nonlinear combinations pass to the limit which, remarkably,  suffice to obtain a weak solution thanks to the  structure of the nonlinearity in the PDE. In the incompressible Euler equations the strong compactness in $H^{-1}(\T^2)$ might still be, in principle, lost, even if the initial vorticities would be equi-integrable. This is different to what happens for \eqref{SQG}, in which the validity of the continuous embedding between the two corresponding norms always allows to deduce the strong compactness in the lower order one as soon as the higher-order one does not display concentrations.

\bibliographystyle{plain} 
\bibliography{biblio}

\end{document}